\newtheorem{theorem}{Theorem}[section]
\newtheorem{lemma}[theorem]{Lemma}
\newtheorem{remark}[theorem]{Remark}
\newtheorem{proposition}[theorem]{Proposition}
\DeclareSymbolFont{bbold}{U}{bbold}{m}{n}
\DeclareSymbolFontAlphabet{\mathbbold}{bbold}
\begin{document}

\title{A PAC algorithm in relative precision for bandit problem with costly sampling}

\author{M. Billaud-Friess\footnotemark[1] \and A. Macherey\footnotemark[1] \footnotemark[2]  \and A. Nouy\footnotemark[1] \and C. Prieur\footnotemark[2]}

\renewcommand{\thefootnote}{\fnsymbol{footnote}}
\footnotetext[1]{Nantes Université, Centrale  Nantes, LMJL, UMR CNRS 6629, 1 rue de la No\"e, 44321 Nantes}
\footnotetext[2]{Univ. Grenoble Alpes, Inria, CNRS, Grenoble INP*, LJK, 38000 Grenoble, France}
\renewcommand{\thefootnote}{\fnsymbol{footnote}}

\date{}

\maketitle

\begin{abstract}
This paper considers the problem of maximizing an expectation function over a finite set, or finite-arm bandit problem.  
We first propose a naive stochastic bandit algorithm for obtaining a probably approximately correct (PAC) solution to this discrete optimization problem in relative precision, that is a solution   
 which solves the optimization problem up to a relative error smaller than a prescribed tolerance, with high probability.
We also propose an adaptive stochastic bandit  algorithm which provides a PAC-solution with the same guarantees. The adaptive algorithm outperforms the mean complexity of the naive algorithm in terms of number of generated samples and is particularly well suited for applications with high sampling cost. \\

\textbf{Keywords:} bandit algorithm - probably approximately correct algorithm - relative precision - concentration inequalities - Monte-Carlo estimates \\

\textbf{Acknowledgements:} This research was partially supported by project MATHAMSUD FANTASTIC 20-MATH-05.

\end{abstract}			

\section{Introduction} 
\label{sec:introduction}

We consider an optimization problem 
\begin{equation}
\max_{\xi \in \Xi} ~ \mathbb{E}[Z(\xi)],
\label{eq:problem}
\end{equation}
where $\mathbb{E}[Z(\xi)]$ is the expectation of a random variable $Z(\xi)$, and where we assume that the set $\Xi$ is finite. Such a problem is encountered in different fields such as reinforcement learning \cite{Sutton2018} or robust optimization \cite{Beyer2007}. \\

\textcolor{black}{Problem \eqref{eq:problem} arises in clinical trials, for the identification of a treatment with the best effectiveness, see \cite{kuleshov2014algorithms}. In such a context $\mathbb{E}[Z(\xi)]$ corresponds to the expected effectiveness, with the response of one patient to the treatment $\xi$ which is a sample of $Z(\xi)$. A treatment $\xi$ gathers, for example, the proportions of the molecules used in the treatment. In that case the dimension of $\xi$ can be large, especially when several molecules have to be recombined to find the best treatment.} \\

To solve \eqref{eq:problem}, classical optimization methods include random search algorithms \cite{Gong2000,Yan1992}, stochastic approximation methods \cite{Dupa1982,Nemirovski2009} and bandit algorithms \cite{Lattimore2020,Audibert2010,Audibert2011,Garivier2011}. In this paper, we focus on unstructured stochastic bandit problems with a finite number of arms where "arms" stands for "random variables" and corresponds here to the $ Z(\xi)$, $\xi \in \Xi $ (see, e.g., \cite[Section 4]{Lattimore2020}). Stochastic means that the only way to learn about the probability distribution of arms $ Z(\xi)$, $\xi \in \Xi $ is to generate i.i.d. samples from it. Unstructured means that knowledge about the probability distribution of one arm $Z(\xi)$ does not restrict the range of possibilities for other arms $Z(\xi^{'})$, $\xi^{'}  \neq \xi$. \\

Additionally, we suppose here it is numerically costly to sample the random variables $Z(\xi)$, $\xi \in \Xi$. Our aim is thus to solve \eqref{eq:problem} by sampling as few as possible the random variables $Z(\xi)$, $\xi \in \Xi$. However, it is not feasible to solve \eqref{eq:problem} almost surely using only a finite number of samples from the random variables $Z(\xi)$, $\xi \in \Xi$. Thus, it is relevant to adopt a Probably Approximately Correct (PAC) approach (see e.g. \cite{Evendar2002}). For a precision $\tau_{abs}$ and a probability $\lambda \in (0,1)$, a $(\tau_{abs},\lambda)$-PAC algorithm returns $\hat{\xi}$ such that 
\begin{equation}
\mathbb{P} \left( \mathbb{E}[Z(\xi^{\star})] - \mathbb{E}[Z(\hat{\xi})] \le \tau_{abs} \right) \ge 1 - \lambda, \quad \xi^\star \in \arg \max_{\xi \in \Xi} ~ \mathbb{E}[Z(\xi)].
\label{eq:discreteabsolutePAC}
\end{equation}
Until recently, one of the main focus of bandit algorithms was the best arm (random variable) identification \cite{Audibert2010}, through the use of Successive Rejects algorithm or Upper Confidence Bounds algorithms. Such algorithms are $(0,\lambda)$-PAC algorithms, as stated in \cite{Evendar2002}. Racing algorithms \cite{Audibert2011} were designed to solve the best arm identification problem too and are mainly analyzed in a finite budget setting, which consists in fixing a maximum number of samples that can be used. 
While trying to identify the best arm, bandit algorithms also aim at  minimizing the regret \cite{audibert2009exploration,Bubeck2011,Garivier2011}. More recently, other focuses have emerged, such as the identification of the subset of $\Xi$ containing the $m$ best arms \cite{Kal2012,Kaufmann2016} or the identification of "good arms" (also known as thresholding bandit problem)  that are random variables whose expectation is greater or equal to a given threshold \cite{Kano2019,Tao2019,Muk2017,Locatelli2016}.  \\

The $(\tau_{abs},\lambda)$-PAC algorithms mentioned above measure the error in  absolute precision. However, without knowing $\mathbb{E}[Z(\xi^\star)]$, providing in advance a relevant value for $\tau_{abs}$ is not an easy task. In this work, we rather consider $(\tau,\lambda)$-PAC algorithms in relative precision that return $\hat \xi \in \Xi$ such that 
\begin{equation}
\mathbb{P} \left( \mathbb{E}[Z(\xi^{\star})] - \mathbb{E}[Z(\hat{\xi})] \le \tau | \mathbb{E}[Z(\xi^\star)]  | \right) \ge 1 - \lambda,
\label{eq:discreterelativePAC}
\end{equation}
where $ \tau$ and $\lambda$ are set in advance in $(0,1)$. 
We introduce two algorithms that yield a solution $\hat \xi$ satisfying \eqref{eq:discreterelativePAC}. The first algorithm builds an estimate precise enough for each expectation $\mathbb{E}[Z(\xi)]$. This naive approach drives a majority of the budget on the random variables with the lowest expectations in absolute value. In order to avoid this drawback and thus to reduce the number of samples required to reach the prescribed relative precision, we propose a second algorithm which adaptively samples random variables exploiting confidence intervals obtained from an empirical Berstein concentration inequality.  \\

The outline of the paper is as follows. In Section \ref{sec:1}, \textcolor{black}{we recall and extend to sub-Gaussian random variables} a Monte-Carlo estimate for the expectation of a single random variable that has been proposed in \cite{Mnihthesis}. It provides an estimation of the expectation with guaranteed relative precision, with high probability. In Section \ref{sec:2}, we introduce two new algorithms that rely on these Monte-Carlo estimates and yield 
a solution to \eqref{eq:discreterelativePAC}. Then, we study numerically the performance of our algorithms and compare them to algorithms from the literature, possibly adapted  to solve \eqref{eq:discreterelativePAC}.


\section{Monte-Carlo estimate with guaranteed relative precision}
\label{sec:1}
In what follows, we consider a random variable $Z$ defined on probability space $(\Omega,\mathcal{F},\mathbb{P})$.
We denote by $\overline{Z}_m$ the empirical mean of $Z$ and by $\overline{V}_m$ its empirical variance, respectively defined by
\begin{equation*}
\overline{Z}_m = \dfrac{1}{m} \sum_{i=1}^m Z_i \quad \text{and} \quad \overline{V}_m = \dfrac{1}{m} \sum_{i=1}^m \left( Z_i - \overline{Z}_m \right)^2,
\end{equation*}
where $(Z_i)_{i\ge 1}$ is a sequence  of i.i.d. copies of $Z$. The aim is to provide  an estimate $\hat{\mathbb{E}}[Z]$ of $\mathbb{E}[Z]$ which satisfies 
\begin{equation}
\mathbb{P} \left( | \hat{\mathbb{E}}[Z] - \mathbb{E}[Z] | \le \epsilon | \mathbb{E}[Z] | \right) \ge 1 - \delta,
\label{eq:epsilondeltaestimate1}
\end{equation}
with $(\epsilon,\delta) \in (0,1)^2$ given a priori. \textcolor{black}{We assume that the distribution of $Z$ satisfies the concentration inequality
\begin{align}
\mathbb{P} \left( | \overline{Z}_m - \mathbb{E}[Z] | \le c(m,x) \right) \ge 1 - x,
\label{eq:genericconcentrationinequality}
\end{align} 
where $x$ is in $(0,1)$ and $c(m,x)$ depends on $m$ and $x$. Below we recall standard concentration inequalities for sub-Gaussian\footnote{\textcolor{black}{A random variable $Z$ is said to be sub-Gaussian of parameter $\gamma > 0$ if, for all $s \ge 0$ we have
$\mathbb{E}[\exp(s(Z - \mathbb{E}[Z])] \le \exp(s^2 \gamma / 2).
$}} and bounded random variables. In the rest of the paper we denote by $SG(\gamma)$ the set of sub-Gaussian random variables of parameter $\gamma$ and by $B(a,b)$ the set of bounded random variables taking their values in the interval $[a,b]$.}
\textcolor{black}{\begin{theorem}
\label{th:hoeffding}
If $Z$ is in $SG(\gamma)$, \eqref{eq:genericconcentrationinequality} holds with $$c(m,x) = \sqrt{\frac{2 \gamma }{m} \log(2/x)}$$ for any $m \in \mathbb{N}$ and $x \in (0,1)$.
\end{theorem}}
\begin{theorem}\label{th:bernstein}
If $Z$ \textcolor{black}{is in $ B(a,b)$, \eqref{eq:genericconcentrationinequality} holds with $$c(m,x) = \sqrt{\frac{2\overline{V}_m \log(3/x)}{m}} + \frac{3(b-a) \log(3/x)}{m}$$} for any $m \in \mathbb{N}$ and $x \in (0,1)$. 
\end{theorem}
\begin{proof}
We simply apply \cite[Theorem 1]{audibert2009exploration} to $Z-a$ which is a positive random variable whose values are lower than $b-a$. 
\qed\end{proof}
Based on Theorem \ref{th:bernstein}, several estimates for $\mathbb{E}(Z)$ have been proposed in \cite{Mnihthesis,Mnih2008}. We focus in this paper on the estimate introduced in \cite[Equation (3.7)]{Mnihthesis} \textcolor{black}{and generalize it to any concentration inequality written as \eqref{eq:genericconcentrationinequality}.}

\subsection{Monte-Carlo estimate}
\label{ssec:21}
Considering a sequence $(d_m)_{m \ge 1}$ in $(0,1)$, we introduce the sequence $(c_m)_{m \ge 1}$ defined, for all $m \ge 1$, by 
\begin{equation}
\begin{split}
& \textcolor{black}{c_m =  c(m,d_m).} 
\end{split}
\label{eq:cm}
\end{equation}
\noindent Using \textcolor{black}{\eqref{eq:genericconcentrationinequality}}, we see that $c_m$ stands for the half-length of a confidence interval of level $1-d_m$ for $ \mathbb{E}[Z]$, i.e. 
\begin{equation}
\mathbb{P}(| \overline{Z}_m - \mathbb{E}[Z]| \le c_m) \ge 1 - d_m. \label{concentation-cm-dm}
\end{equation}
Let $M$ be an integer-valued random variable on $(\Omega,\mathcal{F},\mathbb{P})$, such that 
\begin{equation}
c_M \le \epsilon | \overline{Z}_M |,
\label{eq:Mnonmin}
\end{equation}
with $\epsilon \in (0,1)$.
Then, we define the following estimate
\begin{equation}
\begin{split}
\hat{\mathbb{E}}_{M}[Z] = \overline{Z}_{M} - \epsilon ~ \text{sign}{(\overline{Z}_{M})} c_{M}.
\end{split}
\label{estimateur:anyrandomvariable}
\end{equation}

\begin{proposition} 
Let $\epsilon, \delta \in (0,1)$. Assume that $Z$ takes its values in a bounded interval $[a,b]$ and 
that  $(d_m)_{m \ge 1} $ satisfies 
\begin{equation}
\sum_{m \ge 1} d_m \le \delta.
\label{assumption:sumdmledelta}
\end{equation}
Then the estimate $\hat{\mathbb{E}}_{M}[Z]$ defined by \eqref{estimateur:anyrandomvariable}, with $M$  satisfying \eqref{eq:Mnonmin}, is such that 
\begin{equation}
\mathbb{P} \left( \left| \hat{\mathbb{E}}_M[Z] - \mathbb{E}[Z] \right| \le \epsilon |\mathbb{E}[Z]| \right) \ge 1 - \delta.
\label{eq:epsilondeltaestimate}
\end{equation}
\label{prop:EBStopguarantee}
\end{proposition}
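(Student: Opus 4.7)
The plan is to combine a uniform-in-$m$ concentration bound via the union bound with a short algebraic argument that exploits the sign of $\overline{Z}_M$.

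First, I would introduce the favorable event $\Omega_0 = \bigcap_{m \ge 1}\{|\overline{Z}_m - \bbE[Z]| \le c_m\}$. By \eqref{concentation-cm-dm} applied with $x = d_m$, each event in the intersection has probability at least $1 - d_m$, and a union bound combined with the assumption \eqref{assumption:sumdmledelta} gives $\bbP(\Omega_0^c) \le \sum_{m \ge 1} d_m \le \delta$, so $\bbP(\Omega_0) \ge 1 - \delta$. Since the inequality holds uniformly in $m$ on $\Omega_0$, it holds in particular at the random index $m = M$, yielding $|\overline{Z}_M - \bbE[Z]| \le c_M$ pointwise on $\Omega_0$; no stopping-time hypothesis on $M$ is needed for this step, only the uniform bound.

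Next, on $\Omega_0$ I would chain this with the defining property $c_M \le \epsilon|\overline{Z}_M|$ from \eqref{eq:Mnonmin} to obtain $|\overline{Z}_M - \bbE[Z]| \le \epsilon|\overline{Z}_M|$. Because $\epsilon \in (0,1)$, this forces $\overline{Z}_M$ and $\bbE[Z]$ to share a common nonzero sign, so $\text{sign}(\overline{Z}_M) = \text{sign}(\bbE[Z])$. The conclusion $|\hat{\bbE}_M[Z] - \bbE[Z]| \le \epsilon|\bbE[Z]|$ then reduces, by symmetry $Z \mapsto -Z$ (which flips the signs of $\overline{Z}_M$, $\bbE[Z]$ and $\hat{\bbE}_M[Z]$ while leaving $c_M$ unchanged), to the case $\overline{Z}_M > 0$, where $\hat{\bbE}_M[Z] = \overline{Z}_M - \epsilon c_M$ and $\bbE[Z] > 0$. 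In that case, the target bound is equivalent to the two-sided estimate $(1-\epsilon)\bbE[Z] \le \overline{Z}_M - \epsilon c_M \le (1+\epsilon)\bbE[Z]$; I would verify the right-hand inequality using $\bbE[Z] \ge \overline{Z}_M - c_M$, and the left-hand one using $\bbE[Z] \le \overline{Z}_M + c_M$, in each case finishing by invoking $c_M \le \epsilon \overline{Z}_M$.

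The main subtlety lies in this last step. A naive application of the triangle inequality yields only $|\hat{\bbE}_M[Z] - \bbE[Z]| \le (1+\epsilon)c_M$, which cannot be rewritten as $\epsilon|\bbE[Z]|$: combining $c_M \le \epsilon\overline{Z}_M$ with $\bbE[Z] \ge (1-\epsilon)\overline{Z}_M$ produces a factor $(1+\epsilon)/(1-\epsilon) > 1$ on the wrong side. One must instead use the one-sided enclosure of $\bbE[Z]$ in the direction that the sign correction $-\epsilon\,\text{sign}(\overline{Z}_M) c_M$ actually favors, which is precisely what allows the needed cancellation between $c_M$ and $\epsilon\overline{Z}_M$ in both directions.
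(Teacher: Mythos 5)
Your proposal is correct and follows essentially the same route as the paper: the same union bound over $m$ to get $|\overline{Z}_M - \bbE[Z]| \le c_M$ with probability $1-\delta$, the same sign argument from $c_M \le \epsilon|\overline{Z}_M| < |\overline{Z}_M|$, and the same reduction to the positive case followed by the two one-sided bounds $\overline{Z}_M - c_M \le \bbE[Z] \le \overline{Z}_M + c_M$ combined with $c_M \le \epsilon\overline{Z}_M$. Your sandwich $(1-\epsilon)\bbE[Z] \le \hat{\bbE}_M[Z] \le (1+\epsilon)\bbE[Z]$ is just a rearrangement of the paper's upper and lower bounds on $\hat{\bbE}_M[Z] - \bbE[Z]$, and your closing remark about why the naive triangle inequality fails correctly identifies the point of the sign-corrected estimator.
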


\begin{proof}
We have 
\begin{equation}
\begin{split}
\mathbb{P} \left( | \overline{Z}_{M} - \mathbb{E}[Z]| \le c_{M} \right) & \ge  \mathbb{P} \left( \cap_{m \ge 1} \left\{ | \overline{Z}_{m} - \mathbb{E}[Z]| \le c_{m} \right\} \right)  \\
&\ge 1 - \sum_{m=1}^{+ \infty} \mathbb{P} \left( | \overline{Z}_{m} - \mathbb{E}[Z]| > c_{m} \right) .
\end{split}
\end{equation}
Then using \eqref{concentation-cm-dm} and \eqref{assumption:sumdmledelta}, we deduce  that 
\begin{equation}
\begin{split}
\mathbb{P} \left( | \overline{Z}_{M} - \mathbb{E}[Z]| \le c_{M} \right) 
& \ge 1 - \sum_{m=1}^{+ \infty} d_m \ge 1 - \delta.
\end{split}
\label{eq:probatotaledelta}
\end{equation}
It remains to prove that $\{ | \overline{Z}_M - \mathbb{E}[Z] | \le c_M \}$ implies  $\{ | \hat{\mathbb{E}}_M[Z] - \mathbb{E}[Z] | \le \epsilon |  \mathbb{E}[Z] | \}$. 
In the rest of the proof, we assume that
$| \overline{Z}_{M} - \mathbb{E}[Z]| \le c_M$ holds. 
Let us recall that $c_M \le \epsilon | \overline{Z}_M |$.
Then,  since $\epsilon < 1$, we have 
$$| \overline{Z}_{M} - \mathbb{E}[Z]| \le c_M \le \epsilon | \overline{Z}_M | < | \overline{Z}_M |,$$ which implies that $\mathbb{E}[Z]$, $\overline{Z}_{M}$  and $\hat{\mathbb{E}}_M[Z]$ have the same sign. Therefore, 
\begin{equation*}
| \hat{\mathbb{E}}_{M}[Z] - \mathbb{E}[Z] | = \left | |\overline{Z}_{M} | - \epsilon c_{M} - | \mathbb{E}[Z] | \right|. 
\end{equation*}
It suffices to consider the case $\mathbb{E}[Z] > 0$ and we have
\begin{equation*}
\mathbb{E}[Z] \ge \overline{Z}_{M} - c_{M} \ge \dfrac{1-\epsilon}{\epsilon}c_{M}.
\end{equation*}
Therefore 
\begin{equation*}
\hat{\mathbb{E}}_{M}[Z] - \mathbb{E}[Z] = \overline{Z}_{M} - \mathbb{E}[Z] - \epsilon c_{M} \le (1 - \epsilon)c_{M} \le \epsilon \mathbb{E}[Z].
\end{equation*}
Also
\begin{equation*}
\mathbb{E}[Z] \le \overline{Z}_{M} + c_{M} \le (1 + \epsilon) \overline{Z}_{M},
\end{equation*}
and  
\begin{equation*}
\hat{\mathbb{E}}_{M}[Z] - \mathbb{E}[Z] \ge (1 - \epsilon^2)\overline{Z}_{M} - \mathbb{E}[Z] \ge (1 - \epsilon) \mathbb{E}[Z] - \mathbb{E}[Z] = - \epsilon \mathbb{E}[Z],
\end{equation*}
which concludes the proof. 
\qed\end{proof}
In practice, the computation of the estimate given by \eqref{estimateur:anyrandomvariable} requires a particular choice for the random variable $M$ and for the sequence $(d_m)_{m \ge 1}$. A natural choice for $M$ which satisfies \eqref{eq:Mnonmin} is 
\begin{equation}
M = \min \left\{ m \in \mathbb{N}^\star ~ : ~ c_m \le \epsilon | \overline{Z}_m | \right\}.
\label{eq:definitionM}
\end{equation}
\textcolor{black}{To ensure that $\hat{\mathbb{E}}_M[Z]$ can be built using a finite number of samples $Z_i$, we take a} sequence $(d_m)_{m \ge 1}$ such that 
\begin{equation}
\log(\textcolor{black}{1}/d_m)/m \underset{m \rightarrow + \infty}{{\longrightarrow}}0.\label{log3dm}
\end{equation} 
\textcolor{black}{Under this condition, with $c_m = c(m,d_m)$ defined in Theorem \ref{th:hoeffding} or Theorem \ref{th:bernstein}, we have that $c_m$ converges to $0$ almost surely. This condition together with $\mathbb{E}[Z] \neq 0$ are} sufficient to ensure that  $M < + \infty$ almost surely.

\begin{remark}
When choosing $M$ as in \eqref{eq:definitionM} \textcolor{black}{with $c_m = c(m,d_m)$ as defined in Theorem \ref{th:bernstein}}, the estimate defined by \eqref{estimateur:anyrandomvariable} is the one proposed in \cite[equation (3.7)]{Mnihthesis} for bounded random variables. A variant of this estimate can be found in \cite{Mnih2008}. 
\end{remark}

\begin{remark}
\textcolor{black}{Sometimes the sub-Gaussian parameter $\gamma$ (for sub-Gaussian random variables) or the exact value of $b-a$ (for bounded random variables) is not known. In this case, one can use any upper bound for $\gamma$ or $b-a$ and derive less tight concentration inequalities using Theorem \ref{th:hoeffding} and Theorem \ref{th:bernstein}.}
\end{remark}
\textcolor{black}{We propose to perform in the next section a complexity analysis for building an estimate in relative precision.}

\subsection{Complexity analysis}
\label{ssec:22}

In this section, we state \textcolor{black}{two complexity results, one for each concentration inequality we presented earlier (see Theorem \ref{th:hoeffding} and Theorem \ref{th:bernstein})}. Following \cite{Mnihthesis}, we focus on a particular sequence $(d_m)_{m \ge 1}$ defined by 
\begin{equation}
d_m = \delta c m^{-p} , \quad c = \frac{p-1}{p},\label{assumption:dmforme}
\end{equation}
which satisfies \eqref{assumption:sumdmledelta} and \eqref{log3dm}, for any $p>1$.
\textcolor{black}{\begin{proposition}[Complexity analysis for sub-Gaussian random variables]\label{prop:complexityMhoeffding}
Let $0< \delta \le 1$ and let $Z \in SG(\gamma)$ with expectation $\mu = \mathbb{E}[Z] $. If $\mu \neq 0$ and $(d_m)_{m \ge 1}$ satisfies \eqref{assumption:dmforme}, then $M$ defined by \eqref{eq:definitionM} with $c_m = c(m,d_m)$ as defined in Theorem \ref{th:hoeffding} satisfies $M < + \infty$ almost surely and 
\begin{equation}
\mathbb{P} \left( M >  \left \lceil \frac{2}{k} \log \left( \frac{2q}{k} \right) \right \rceil \right) \le \delta,
\label{eq:Mmajoreprobahoeffding}
\end{equation} 
where $ \lceil \cdot  \rceil$ denotes the ceil function, $k = \frac{\epsilon^2 \mu^2}{2 \gamma p (1 + \epsilon)^2}$ and $q = \sqrt[p]{\frac{2}{\delta c}}$. Moreover,
$$
\mathbb{E}(M) \le \left \lceil \frac{2}{k} \log \left( \frac{2q}{k} \right) \right \rceil  + \delta.
$$
\end{proposition}}
\begin{proof}
\textcolor{black}{Let us define the event $B  =  \bigcap_{m \ge 1} B_m$ with 
$$
B_m = \left\{ | \overline{Z}_m - \mu | \le c_m \right\}.
$$
A union bound argument and  Theorem \ref{th:hoeffding} with $x=\log(2/d_m)$ yield $\mathbb{P}(B) \ge 1 - \delta$. It remains to  prove that $B$ implies
\begin{equation}
 M \leq \left \lceil \frac{2}{k} \log \left( \frac{2q}{k} \right) \right \rceil \label{eq:majorationMhoeffding}
,\end{equation}
which will {prove} \eqref{eq:Mmajoreprobahoeffding}. In what follows, we suppose that $B$ holds.} \\
\textcolor{black}{We seek a bound for $M$ the smallest integer such that $c_M \le \epsilon | \overline{Z}_M |$. Defining $M^{\star}$ the smallest integer such that $c_M \le \frac{\epsilon | \mu |}{1 + \epsilon}$, we observe that, since $B$ holds, we have $c_{M^{\star}} \le \epsilon | \overline{Z}_{M^{\star}} |$ which gives $M \le M^{\star}$. Applying Lemma \ref{lemma:log} (stated in appendix) with $k = \frac{\epsilon^2 \mu^2}{2 \gamma p (1 + \epsilon)^2}$ and $q = \sqrt[p]{\frac{2}{\delta c}}$, we get
\begin{align*}
M^{\star} \le \frac{2}{k} \log(\frac{2q}{k}) \le \left \lceil \frac{2}{k} \log \left( \frac{2q}{k} \right) \right \rceil,
\end{align*}
which ends the proof of the first result.} \\
\textcolor{black}{Let us now prove the result in expectation. Let $K := \left \lceil \frac{2}{k} \log \left( \frac{2q}{k} \right) \right \rceil.$ We first note that 
$$
\mathbb{E}(M) = \sum_{j=0}^\infty \mathbb{P}(M>j) \le K + \sum_{j=K}^\infty \mathbb{P}(M>j) .
$$
If $M>j$, then $c_j >\epsilon \vert \bar Z_j \vert$. 
For $j \ge K$, we would like to prove that $c_j >\epsilon \vert \bar Z_j \vert$ implies $ (B_j)^c$, or equivalently that $B_j$ implies 
$c_j \le \epsilon \vert \bar Z_j \vert$. For $j\ge K$, $B_j$ implies $\vert \mu \vert \le \vert \bar Z_j \vert + c_j$ and $c_j \le \frac{\epsilon | \mu |}{1 + \epsilon}$ by Lemma \ref{lemma:log}. Combining the previous inequalities, we easily conclude that $B_j$ implies $c_j \le \epsilon \vert \bar Z_j \vert$.
For $j\ge K$, we then have 
$\mathbb{P}(M>j) \le \mathbb{P}(c_j >\epsilon \vert \bar Z_j \vert) \le   \mathbb{P}(B_j^c) \le d_j$, and then
$$
\mathbb{E}(M) \le K + \sum_{j=K}^\infty d_j \le K + \delta
,$$
which ends the proof.}
\qed\end{proof}

The following result extends the result of \cite[Theorem 2]{Mnihthesis} stated for random variables $Z$ with range in $[0,1]$. 

\begin{proposition}[\textcolor{black}{Complexity analysis for bounded random variables}]\label{prop:complexityMbernstein}
Let $0<\delta \le 3/4$ and let $Z \textcolor{black}{\in B(a,b)}$ with expectation $\mu = \mathbb{E}[Z] $ and variance $\sigma^2 = \mathbb{V}[Z]$. If $\mu\neq 0$ and $(d_m)_{m \ge 1}$ satisfies \eqref{assumption:dmforme}, then $M$ defined by \eqref{eq:definitionM} \textcolor{black}{with $c_m =c(m,d_m)$ as defined  in Theorem \ref{th:bernstein}} satisfies $M < + \infty$ almost surely and 
\begin{equation}
\mathbb{P} \left( M>  \left \lceil \dfrac{2}{\nu} \left( p \log \left( \dfrac{2p}{\nu} \right) + \log \left( \dfrac{3}{c \delta} \right)  \right)  \right \rceil \right) \le 4 \delta/3,
\label{eq:Mmajoreproba}
\end{equation} 
where $ \lceil \cdot  \rceil$ denotes the ceil function and $$\nu = \min \left (   \frac{\max(\sigma^2,\epsilon^2 \mu^2)}{(b-a)^2}  , \frac{\epsilon^2 \mu^2}{(1+\epsilon)^2 \max(\sigma^2,\epsilon^2 \mu^2) \gamma}  \right ),$$
with $\gamma=(\sqrt{2 + 2\sqrt{2} + 2/3}+3)^2$.
Moreover,
$$
\mathbb{E}(M) \le \left \lceil \dfrac{2}{\nu} \left( p \log \left( \dfrac{2p}{\nu} \right) + \log \left( \dfrac{3}{c \delta} \right)  \right)  \right \rceil  + 4\delta/3.
$$
\end{proposition}

\begin{proof}
See Appendix. 
\qed\end{proof}

\begin{remark}\label{rem:complexity}
The result from Proposition \ref{prop:complexityMbernstein} helps in understanding the influence of parameters $(\varepsilon,\delta)$ appearing in \eqref{eq:epsilondeltaestimate1} on $M$. Indeed,  we deduce from this result that for $\delta<1/2$, 
$$
\mathbb{E}(M) \lesssim \nu^{-1} \log(\nu^{-1}) + (\nu^{-1}+1)\log(\delta^{-1}).
$$ 
We first observe a weak impact of $\delta$ on the average complexity. When $\epsilon \to 0$, we have  $\nu \sim \epsilon^{2}\frac{\mu^2}{ \sigma^2 \gamma}$. Then for fixed $\delta$ and $\epsilon\to 0$, the bound for $\mathbb{E}(M)$ is in $O(\epsilon^{-2}\frac{ \sigma^2}{\mu^2})$. As expected, the relative precision $\epsilon$ has a much stronger impact on the average complexity. 
\end{remark}

\section{Optimization algorithms with guaranteed relative precision}
\label{sec:2}

In this section we consider a finite collection of random variables $Z(\xi)$ on $(\Omega,\mathcal{F},\mathbb{P})$, indexed by $\xi \in \Xi$, and such that $\mathbb{E}[Z(\xi)] \neq 0$. We denote by $\overline{Z(\xi)}_m$ the empirical mean of $Z(\xi)$ and $\overline{V(\xi)}_m$ its empirical variance, respectively defined by
\begin{equation}
\overline{Z(\xi)}_m  = \frac{1}{m} \sum_{i=1}^{m} Z(\xi)_i \quad \text{and} \quad \overline{V(\xi)}_{m}  = \frac{1}{{m}} \sum_{i=1}^{m} \left( Z(\xi)_i - \overline{Z(\xi)}_{m} \right)^2,
\end{equation} 
where $\left\{ (Z(\xi)_i)_{i\ge 1} : \xi \in \Xi \right\}$ are independent i.i.d. copies of $\left\{ Z(\xi) : \xi \in \Xi \right\}$. We assume that 
\begin{align*}
\mathbb{P} \left( | \overline{Z(\xi)}_m - \mathbb{E}[Z(\xi)] | \le c(m,x,\xi) \right) \ge 1 - x,
\end{align*}
for each $\xi \in \Xi$, $0 \le x \le 1$ and $m \ge 1$. \textcolor{black}{We introduce then $\# \Xi$ different sequences 
$$
c_{\xi,m} = c(m,d_m,\xi),
$$
where $(d_m)_{m \ge 1}$ is a positive sequence, independent from $\xi$, such that $\sum_{m \ge 1} d_m \le \delta$. For example  we take $c_{\xi,m} =\sqrt{\frac{2 \overline{V(\xi)}_{m} \log(3/d_{m})}{{m}}} + \frac{3 \left( b(\xi) - a(\xi) \right) \log(3/d_{m})}{{m}}$ for bounded random variables and $c_{\xi,m} = \sqrt{\frac{2 \gamma(\xi) }{m} \log(2/d_m)}$ for sub-Gaussian random variables.} \\
Taking $\epsilon$ in $(0,1)$, for each $\xi$ in $\Xi$, we define, as in \eqref{eq:definitionM}, 
\begin{equation}
m(\xi) = \min \left\{ m \in \mathbb{N}^\star ~ : ~ c_{\xi,m} \le \epsilon | \overline{Z(\xi)}_m | \right\}.
\label{eq:definitionMxi}
\end{equation}
Then defining $s(\xi) := \text{sign} ( \overline{Z(\xi)}_{m(\xi)} )$, we propose the following estimate for $\mathbb{E}[Z(\xi)]$:
\begin{equation}
\hat{\mathbb{E}}_{{m(\xi)}}[Z(\xi)] = \overline{Z(\xi)}_{{M(\xi)}} - \epsilon ~ s(\xi) c_{\xi,{m(\xi)}}.
\label{eq:estimateurZxi}
\end{equation}
\newline
These notation being introduced, we propose below two algorithms returning $\hat{\xi}$ in $\Xi$ such that
\begin{equation}
\mathbb{P} \left( \mathbb{E}[Z(\xi^\star)] - \mathbb{E}[Z(\hat{\xi})] \le \tau | \mathbb{E}[Z(\xi^\star)] | \right) \ge 1 - \lambda, \quad \xi^\star \in \arg \max_{\xi \in \Xi}\mathbb{E}[Z(\xi)],
\label{eq:PACproblem}
\end{equation}
for given $(\tau,\lambda)$ in $(0,1)^2$. 

\subsection{Non-adaptive algorithm}

We first propose a non-adaptive algorithm that provides a parameter $\hat \xi$ satisfying \eqref{eq:PACproblem}, by selecting the maximizer of independent estimates $\hat{\mathbb{E}}_{m(\xi)}[Z(\xi)]$  of  $\mathbb{E}[Z(\xi)]$ over $\Xi$.
\begin{algorithm}[h]\normalsize
\caption{Non-adaptive \textcolor{black}{algorithm}}
\label{alg:nonadaptatif}
\begin{algorithmic}[1]
\REQUIRE $\tau$, $\lambda$, $\{Z(\xi)\}_{\xi \in \Xi}$.
\ENSURE $\hat \xi$ 
\STATE Set $\epsilon = \frac{\tau}{2 + \tau}$ and $\delta = \lambda/ \# \Xi$. 
\FORALL{$\xi \in \Xi$}
\STATE Build an estimate $\hat{\mathbb{E}}_{m(\xi)}[Z(\xi)]$ of $\mathbb{E}[Z(\xi)]$ using \eqref{eq:estimateurZxi} with $\epsilon$ and $\delta$ as above.
\ENDFOR
\STATE Select $\hat \xi$ such that
$$
\hat{\xi} \in \underset{\xi \in \Xi}{\arg \max} ~ \hat{\mathbb{E}}_{m(\xi)} \left[ Z(\xi) \right]. 
$$
\end{algorithmic}
\end{algorithm} 

\begin{proposition} Let $(\tau,\lambda) \in (0,1)^2$. We assume that, for all $\xi \in \Xi$,  $Z(\xi)$ is a random variable with $\mathbb{E}[Z(\xi)] \neq 0$. Moreover we assume that the sequence $(d_m)_{m \ge 1}$ is such that
\begin{equation}
\sum_{m=1}^{+ \infty} d_m \le  \frac{\lambda}{\# \Xi} := \delta  \quad \text{and} \quad \log(1/d_m)/m \underset{m \rightarrow + \infty}{\rightarrow} 0.
\label{eq:conditionsurdmcomplete1}
\end{equation}
Then, for all $\xi$ in $\Xi$, the estimate $\hat{\mathbb{E}}_{m(\xi)}[Z(\xi)]$ is well defined and satisfies
\begin{equation}
\mathbb{P} \left( \left| \mathbb{E}[Z(\xi)] - \hat{\mathbb{E}}_{m(\xi)}[Z(\xi)] \right| \le \epsilon \left| \mathbb{E}[Z(\xi)] \right| \right) \ge 1 - \delta,
\label{eq:guarantiesestimateur}
\end{equation}
with $\epsilon = \frac{\tau}{2 + \tau}$. Moreover, the value $\hat{\xi}$ returned by Algorithm \ref{alg:nonadaptatif} satisfies \eqref{eq:PACproblem}. 
\label{prop:naiveguarantee} 
\end{proposition}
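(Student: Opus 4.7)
The plan is to apply \Cref{prop:EBStopguarantee} to each $Z(\xi)$ separately, take a union bound over the finite set $\Xi$, and translate the resulting simultaneous relative-error event into the target PAC inequality \eqref{eq:PACproblem} via an elementary calculation exploiting the choice $\epsilon = \tau/(2+\tau)$.

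First I would apply \Cref{prop:EBStopguarantee} with $\epsilon = \tau/(2+\tau)$ and $\delta = \lambda/\#\Xi$. Assumption \eqref{eq:conditionsurdmcomplete1} gives $\sum_{m\ge 1} d_m \le \delta$, which matches the hypothesis; together with $\bbE[Z(\xi)] \neq 0$, the condition $\log(3/d_m)/m \to 0$ combined with the strong law of large numbers implies $c_{\xi,m} \to 0$ a.s. and $|\overline{Z(\xi)}_m| \to |\bbE[Z(\xi)]| > 0$ a.s., so $m(\xi) < +\infty$ almost surely and $\hat{\bbE}_{m(\xi)}[Z(\xi)]$ is well defined. \Cref{prop:EBStopguarantee} then yields \eqref{eq:guarantiesestimateur} for every $\xi \in \Xi$. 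A union bound over the finite set $\Xi$ gives $\bbP(A) \ge 1 - \#\Xi \cdot \delta = 1 - \lambda$, where $A := \bigcap_{\xi \in \Xi} \{|\hat{\bbE}_{m(\xi)}[Z(\xi)] - \bbE[Z(\xi)]| \le \epsilon|\bbE[Z(\xi)]|\}$.

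It then remains to show the deterministic inclusion $A \subset \{\bbE[Z(\xi^\star)] - \bbE[Z(\hat\xi)] \le \tau|\bbE[Z(\xi^\star)]|\}$. On $A$, since $\epsilon < 1$, each $\hat{\bbE}_{m(\xi)}[Z(\xi)]$ has the same sign as $\bbE[Z(\xi)]$ and satisfies the bracketing $(1-\epsilon)|\bbE[Z(\xi)]| \le |\hat{\bbE}_{m(\xi)}[Z(\xi)]| \le (1+\epsilon)|\bbE[Z(\xi)]|$. Combining this for $\xi=\xi^\star$ and $\xi=\hat\xi$ with the selection rule $\hat{\bbE}_{m(\hat\xi)}[Z(\hat\xi)] \ge \hat{\bbE}_{m(\xi^\star)}[Z(\xi^\star)]$ from \Cref{alg:nonadaptatif}, and separating the cases $\bbE[Z(\xi^\star)] > 0$ and $\bbE[Z(\xi^\star)] < 0$, one derives $\bbE[Z(\xi^\star)] - \bbE[Z(\hat\xi)] \le \frac{2\epsilon}{1-\epsilon}|\bbE[Z(\xi^\star)]|$. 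The algebraic identity $\frac{2\epsilon}{1-\epsilon} = \tau$ for $\epsilon = \tau/(2+\tau)$ then closes the argument.

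The only delicate part is the sign bookkeeping: when $\bbE[Z(\xi^\star)] < 0$, all arms have negative expectation, so the algorithmic inequality $\hat{\bbE}_{m(\hat\xi)}[Z(\hat\xi)] \ge \hat{\bbE}_{m(\xi^\star)}[Z(\xi^\star)]$ corresponds to $|\hat{\bbE}_{m(\hat\xi)}[Z(\hat\xi)]| \le |\hat{\bbE}_{m(\xi^\star)}[Z(\xi^\star)]|$, opposite to the positive case, and the $(1\pm\epsilon)$ factors must be distributed accordingly; once the sign of $\bbE[Z(\xi^\star)]$ is fixed, the remaining manipulation is routine.
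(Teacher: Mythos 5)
Your proposal is correct and follows essentially the same route as the paper: well-definedness from $c_{\xi,m}\to 0$ a.s.\ and $\bbE[Z(\xi)]\neq 0$, an application of \Cref{prop:EBStopguarantee} with $\epsilon=\tau/(2+\tau)$ and $\delta=\lambda/\#\Xi$, a union bound over $\Xi$, and the same chain of inequalities combining the $(1\pm\epsilon)$ bracketing with the selection rule to get the factor $\frac{2\epsilon}{1-\epsilon}=\tau$. The only cosmetic difference is that you split on the sign of $\bbE[Z(\xi^\star)]$ where the paper tracks both cases at once via the sign variables $s(\xi)$.
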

\begin{proof}
The assumptions on $(d_m)_{m \ge 1}$ in \eqref{eq:conditionsurdmcomplete1} combined with $\mathbb{E}[Z(\xi)] \neq 0$ ensure that for all $\xi$ in $\Xi$ , $M(\xi)$ is almost surely finite. Then, for all $\xi$ in $\Xi$, the estimate $\hat{\mathbb{E}}_{m(\xi)}[Z(\xi)]$ is well defined. Applying  Proposition \ref{prop:EBStopguarantee} for each $Z(\xi)$ with $\delta = \lambda / \# \Xi$ and $\epsilon = \frac{\tau}{2 + \tau}$, we obtain \eqref{eq:guarantiesestimateur}.\\
Now let $A(\xi) = \left\{ \left| \mathbb{E} \left[ Z(\xi) \right] - \hat{\mathbb{E}}_{m(\xi)} \left[ Z(\xi) \right] \right| \le  \epsilon \left| \mathbb{E} \left[ Z(\xi) \right] \right| \right\}$. By \eqref{eq:guarantiesestimateur}, $\mathbb{P}(A(\xi)) \ge 1 - \frac{\lambda}{\#\Xi}$ and by a union bound argument, $\mathbb{P}(\cap_{\xi \in \Xi} A(\xi)) \ge 1 - \lambda$. To prove that $\hat{\xi}$ satisfies \eqref{eq:PACproblem}, 
it remains to prove that $\cap_{\xi \in \Xi} A(\xi)$ implies $\mathbb{E} \left[ Z(\xi^\star) \right] - \mathbb{E} [Z(\hat \xi)] \le \tau \left| \mathbb{E} \left[ Z(\xi^\star) \right] \right|$. In what follows we suppose that $\cap_{\xi \in \Xi} A(\xi)$ holds.  Since $\epsilon < 1$, 
 $\mathbb{E}[Z(\xi)]$,  $\hat{\mathbb{E}}_{m(\xi)}[Z(\xi)]$ and $\overline{Z(\xi)}_{m(\xi)}$ have the same sign, that we denote by $s(\xi)$. Since $A(\xi^\star) \cap A(\hat{\xi})$ holds, we have 
\begin{equation*}
\begin{split}
\mathbb{E} \left[ Z(\xi^\star) \right] - \mathbb{E} [Z(\hat \xi)] &\le \mathbb{E} \left[ Z(\xi^\star) \right] - \dfrac{\hat{\mathbb{E}}_{m(\hat{\xi})} [Z(\hat \xi)]}{1 + s(\hat{\xi})\epsilon} \le \mathbb{E} \left[ Z(\xi^\star) \right] - \dfrac{\hat{\mathbb{E}}_{m(\xi^\star)} \left[ Z( \xi^\star) \right]}{1 + s(\hat{\xi})\epsilon} \\
& \le \mathbb{E} \left[ Z(\xi^\star) \right] - \dfrac{1 - s(\xi^\star)\epsilon}{1 + s(\hat{\xi})\epsilon} \mathbb{E} \left[ Z(\xi^\star) \right] 
 \\
 &= \frac{\epsilon (s(\xi^\star) + s(\hat{\xi}))}{1 + s(\hat{\xi})\epsilon} \mathbb{E} \left[ Z(\xi^\star) \right]. 
\end{split}
\end{equation*}
Then we deduce 
\begin{equation}
\mathbb{E} \left[ Z(\xi^\star) \right] - \mathbb{E} [Z(\hat \xi)]  \le \frac{2 \epsilon}{1 - \epsilon} | \mathbb{E}[Z(\xi^\star)] | = \tau | \mathbb{E}[Z(\xi^\star)] |,
\label{final-guarantee-naive}
\end{equation}
which ends the proof. \qed
\end{proof}
\begin{remark}\label{rem:alg-nonadapt-positive}
If $\mathbb{E}[Z(\xi^\star)] > 0$, we can prove that $s(\hat{\xi}) = s(\xi^\star) =1$, so that the inequality \eqref{final-guarantee-naive} becomes 
\begin{equation*}
\mathbb{E} \left[ Z(\xi^\star) \right] - \mathbb{E} [Z(\hat \xi)]  \le \frac{2\epsilon}{1 + \epsilon} | \mathbb{E}[Z(\xi^\star)] | .
\end{equation*}
Therefore, we can set $\epsilon = \frac{\tau}{2 - \tau}$ in Algorithm \ref{alg:nonadaptatif} to lower the complexity and still guarantee 
that $\hat{\xi}$ satisfies \eqref{eq:PACproblem}.
\end{remark}
Algorithm \ref{alg:nonadaptatif} provides  for each random variable an estimate $\hat{\mathbb{E}}_{m( {\xi})} [Z(  \xi)]$ that satisfies \eqref{eq:guarantiesestimateur}.  However, as will be illustrated later, this algorithm tends to use many samples for variables with a low expectation in absolute value.  We propose in the next subsection an adaptive algorithm avoiding this drawback by using confidence intervals, which results in a lower overall complexity. 

\subsection{Adaptive algorithm}

The idea of the adaptive algorithm is to successively increase the number of samples $m(\xi)$ of a subset of random variables $Z(\xi)$ that are selected based on confidence intervals of $\mathbb{E}[Z(\xi)]$ deduced from  the concentration inequality of Theorem \ref{th:hoeffding} \textcolor{black}{or Theorem \ref{th:bernstein}}. This algorithm follows the main lines of  the racing  algorithms \cite[Section 4]{Mnih2008}. However  racing algorithms do not allow to sample again a random variable discarded in an earlier step of the algorithm. The adaptive algorithm presented hereafter allow it. \\ 

In order to present this adaptive algorithm, for each $\xi$, we introduce the confidence interval $[ \beta^-_{m(\xi)}(\xi), \beta^+_{m(\xi)}(\xi)]$, with  
\begin{equation}
\beta^{-}_{\xi,m(\xi)} = \overline{Z(\xi)}_{m(\xi)} - c_{\xi,{m(\xi)}} \quad \text{and} \quad  \beta^{+}_{\xi,m(\xi)} = \overline{Z(\xi)}_{m(\xi)} + c_{\xi,{m(\xi)}}. 
\label{eq:beta+-def1}
\end{equation}
From \eqref{concentation-cm-dm}, we have that 
\begin{equation}
\mathbb{P}\left( \beta^{-}_{\xi,m(\xi)} \le  \mathbb{E}(Z(\xi)) \le \beta^{+}_{\xi,m(\xi)}  \right) \ge 1- d_{m(\xi)}.
\label{eq:betaconfint}
\end{equation}
We define $\epsilon_{\xi,m(\xi)}$ by 
\begin{equation*}
\epsilon_{\xi,m(\xi)} = \frac{c_{\xi,{m(\xi)}}}{|\overline{Z(\xi)}_{m(\xi)}|} 
\end{equation*}
if $\overline{Z(\xi)}_{m(\xi)} \neq 0$, or $\epsilon_{\xi,m(\xi)} = +\infty$ otherwise. 
Letting $s(\xi) := \text{sign} ( \overline{Z(\xi)}_{m(\xi)} )$, we use as an estimate for $\mathbb{E}[Z(\xi)]$
\begin{equation}
\hat{\mathbb{E}}_{{m(\xi)}}[Z(\xi)] = \left\{
    \begin{array}{ll}
        \overline{Z(\xi)}_{{m(\xi)}} - \epsilon_{\xi,m(\xi)} ~ s
        (\xi) c_{\xi,{m(\xi)}} & \mbox{if } \epsilon_{\xi,m(\xi)} < 1, \\
        \overline{Z(\xi)}_{m(\xi)} & \mbox{otherwise.}
    \end{array}
\right.
\label{eq:newestimate}
\end{equation}
If $\epsilon_{\xi,m(\xi)} < 1$, we note that 
\begin{equation*}
\hat{\mathbb{E}}_{{m(\xi)}}[Z(\xi)] = ( \overline{Z(\xi)}_{m(\xi)} \mp c_{\xi,m(\xi)} ) \left( 1 \pm s(\xi)\epsilon_{\xi,m(\xi)} \right),
\end{equation*}
so that 
\begin{equation}
\frac{\hat{\mathbb{E}}_{{m(\xi)}}[Z(\xi)]}{1 \pm s(\xi)\epsilon_{\xi,m(\xi)}} = \beta^{\mp}_{\xi,m(\xi)}. 
\label{eq:beta+-def2}
\end{equation}
The adaptive algorithm is described in Algorithm \ref{alg:bandit}. At each iteration $n$, one sample of $Z(\xi)$ is drawn 
for each $\xi$ in a subset $\Xi_n$ selected according to \eqref{eq:enrichingcondition}. 
\begin{algorithm}[h]
\caption{Adaptive \textcolor{black}{algorithm}}
\label{alg:bandit}\normalsize
\begin{algorithmic}[1]
\REQUIRE $\tau$, $\lambda$, $\{Z(\xi)\}_{\xi \in \Xi}$.
\ENSURE $\hat \xi$.
\STATE Set $n=0$, $\Xi_0 = \Xi$, $\epsilon_{\xi,0} = +\infty$  and $m(\xi) = 0$ for all $\xi \in \Xi$. 
\WHILE{$\# \Xi_n > 1$ \textbf{and} $\underset{\xi \in \Xi_n}{\max} ~ \epsilon_{\xi,m(\xi)} > \dfrac{\tau}{2+\tau}$}
\FORALL{$\xi \in \Xi_n$} 
\STATE Sample $Z(\xi)$, increment $m(\xi)$ and update $\epsilon_{\xi,m(\xi)}$. 
\STATE Build the estimate $\hat{\mathbb{E}}_{m(\xi)}[Z(\xi)]$ using \eqref{eq:newestimate}.
\ENDFOR
\STATE Increment $n$ and put in $\Xi_n$ every $\xi \in \Xi$ such that
\begin{equation}
\beta^{+}_{\xi,m(\xi)} \ge \underset{\nu \in \Xi}{\max} ~ \beta^{-}_{\nu,m(\nu)}(\nu).
\label{eq:enrichingcondition}
\end{equation} 
\ENDWHILE
\STATE Select $\hat \xi$ such that 
$$
\hat{\xi} \in \underset{\xi \in \Xi_n}{\arg \max} ~ \hat{\mathbb{E}}_{m(\xi)} \left[ Z(\xi) \right]. 
$$
\end{algorithmic}
\end{algorithm} 
In the next proposition, we prove that the algorithm returns a solution to \eqref{eq:PACproblem} under suitable assumptions.
\begin{proposition}
Let $(\tau,\lambda) \in (0,1)^2$. We assume that $(d_m)_{m \ge 1}$ is a positive sequence that satisfies
\begin{equation}
\sum_{m=1}^{+ \infty} d_m \le \frac{\lambda}{\# \Xi} \quad \text{and} \quad \log(1/d_m)/m \underset{m \rightarrow + \infty}{\rightarrow} 0.
\label{eq:conditionsurdmcomplete}
\end{equation}
Moreover, we assume that, for all $\xi$ in $\Xi$, $Z(\xi)$ is a random variable with $\mathbb{E}[Z(\xi)] \neq 0$. Then, it holds almost surely that Algorithm \ref{alg:bandit} stops and $\hat \xi$ satisfies \eqref{eq:PACproblem}. 
\label{prop:banditresult}
\end{proposition}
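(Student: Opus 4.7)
The plan is to decouple the two conclusions: almost sure termination follows from a strong-law-of-large-numbers event of probability one, whereas the PAC-type bound is established on a separate event of probability at least $1-\lambda$ on which every confidence interval $[\beta^-_{\xi,m},\beta^+_{\xi,m}]$ contains the corresponding true mean.

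For termination, I would work on the probability-one event $F$ on which, for every $\xi \in \Xi$, $\overline{Z(\xi)}_m \to \bbE[Z(\xi)]$ and $\overline{V(\xi)}_m$ stays bounded (strong law of large numbers applied to the bounded variables $Z(\xi)$ and $Z(\xi)^2$), so that $c_{\xi,m} \to 0$ by the assumption $\log(3/d_m)/m \to 0$ in \eqref{eq:conditionsurdmcomplete}. Since $\bbE[Z(\xi)] \neq 0$, any arm $\xi$ with $m(\xi) \to \infty$ then satisfies $\epsilon_{\xi,m(\xi)} = c_{\xi,m(\xi)}/|\overline{Z(\xi)}_{m(\xi)}| \to 0$. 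Assume towards a contradiction that the while-loop runs forever on $F$. At each iteration $n$ there exists $\xi_n \in \Xi_n$ with $\epsilon_{\xi_n,m(\xi_n)} > \tau/(2+\tau)$; by finiteness of $\Xi$, some arm $\xi_0$ equals $\xi_n$ for infinitely many $n$, hence belongs to $\Xi_n$ and is sampled at infinitely many iterations, so $m(\xi_0) \to \infty$. But then $\epsilon_{\xi_0,m(\xi_0)} \to 0$, contradicting $\epsilon_{\xi_0,m(\xi_0)} > \tau/(2+\tau)$ along the corresponding subsequence; hence the loop stops almost surely.

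For the PAC guarantee, set $\epsilon := \tau/(2+\tau)$ and consider
$$
E := \bigcap_{\xi \in \Xi}\bigcap_{m \ge 1}\bigl\{|\overline{Z(\xi)}_m - \bbE[Z(\xi)]| \le c_{\xi,m}\bigr\}.
$$
By \eqref{concentation-cm-dm}, a union bound over $m$ and $\xi$, and \eqref{eq:conditionsurdmcomplete}, $\bbP(E^c) \le \# \Xi \cdot \sum_{m \ge 1} d_m \le \lambda$. On $E$, $\bbE[Z(\xi)] \in [\beta^-_{\xi,m(\xi)},\beta^+_{\xi,m(\xi)}]$ at every iteration, so $\beta^+_{\xi^\star,m(\xi^\star)} \ge \bbE[Z(\xi^\star)] \ge \bbE[Z(\nu)] \ge \beta^-_{\nu,m(\nu)}$ for every $\nu \in \Xi$, and by \eqref{eq:enrichingcondition} one has $\xi^\star \in \Xi_n$ at every iteration. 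When the loop terminates, either $\# \Xi_n = 1$, forcing $\Xi_n = \{\xi^\star\} = \{\hat\xi\}$ and \eqref{eq:PACproblem} trivially, or $\max_{\xi \in \Xi_n}\epsilon_{\xi,m(\xi)} \le \epsilon$. In the second case both $\xi^\star$ and $\hat\xi$ lie in $\Xi_n$ and satisfy $c_{\xi,m(\xi)} \le \epsilon|\overline{Z(\xi)}_{m(\xi)}|$; the second half of the proof of \Cref{prop:EBStopguarantee} (invoked with the sharper radius $\epsilon_{\xi,m(\xi)} \le \epsilon < 1$ already built into \eqref{eq:newestimate}) then yields on $E$ that $\bbE[Z(\xi)]$ and $\hat\bbE_{m(\xi)}[Z(\xi)]$ share the same sign $s(\xi)$ and $|\hat\bbE_{m(\xi)}[Z(\xi)] - \bbE[Z(\xi)]| \le \epsilon|\bbE[Z(\xi)]|$. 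The sign-case chain of inequalities at the end of the proof of \Cref{prop:naiveguarantee} then delivers $\bbE[Z(\xi^\star)] - \bbE[Z(\hat\xi)] \le \frac{2\epsilon}{1-\epsilon}|\bbE[Z(\xi^\star)]| = \tau|\bbE[Z(\xi^\star)]|$, which is \eqref{eq:PACproblem}.

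The subtlety I expect to absorb most of the work is termination, because \eqref{eq:enrichingcondition} can re-insert previously-discarded arms and one cannot simply track a monotonically shrinking active set as in standard racing algorithms. The pigeonhole step above sidesteps this by producing a single arm that sits in $\Xi_n$ for infinitely many iterations, which is enough to trigger the SLLN-driven contradiction; the PAC part then reduces cleanly to the single-arm analysis of \Cref{prop:EBStopguarantee} and the sign-case manipulation of \Cref{prop:naiveguarantee}.
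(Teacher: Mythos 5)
Your proposal is correct and follows essentially the same route as the paper: almost-sure termination by contradiction from $\epsilon_{\xi,m}\to 0$ (the paper pigeonholes on a recurring active set $\Xi^\star$ where you pigeonhole on a single recurring violating arm, a cosmetic difference), and the PAC bound on the event $\bigcap_{\xi}\bigcap_{m}\{|\overline{Z(\xi)}_m-\bbE[Z(\xi)]|\le c_{\xi,m}\}$ of probability at least $1-\lambda$, with $\xi^\star\in\Xi_n$ forced by \eqref{eq:enrichingcondition} and the same sign-case chain yielding $\frac{2\epsilon}{1-\epsilon}=\tau$. No gaps.
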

\begin{proof}
Let $m_n(\xi)$ denote the number of samples of $Z(\xi)$ at iteration $n$ of the algorithm. 
We first prove by contradiction that Algorithm \ref{alg:bandit} stops almost surely. Let us suppose that Algorithm \ref{alg:bandit} does not stop with probability $\eta>0$, that means 
\begin{equation}
\mathbb{P}\left(\forall n>0, ~ \# \Xi_n > 1 \quad \text{and} \quad \max_{\xi \in \Xi_n} \epsilon_{\xi,m_n(\xi)} > \frac{\tau}{2 + \tau}\right) = \eta>0.
\label{eq:nonstoppingcondition} 
\end{equation} 
Since $(\Xi_n)_{n \ge 1}$ is a sequence from  a finite set, we can extract a constant subsequence, still denoted $(\Xi_n)_{n \ge 1}$, equal to $\Xi^\star \subset \Xi$, with $\Xi^\star \neq \emptyset$ such that 
\begin{equation}
\mathbb{P}\left( \forall n>1,\; \max_{\xi \in \Xi^\star} \epsilon_{\xi,m_n(\xi)} > \frac{\tau}{2 + \tau}\right) \ge \eta >0  .
\label{eq:nonstoppingcondition-Xistar} 
\end{equation} 
Since $\log(3/d_m)/m \to 0$ as $m\to +\infty$ and $\mathbb{E}[Z(\xi)] \neq 0$ for all $\xi$, we have that $\epsilon_{\xi,m} \overset{a.s}{\underset{m \rightarrow + \infty}{\rightarrow}} 0$ for all $\xi$ in $\Xi$. Yet, since at each iteration $n$ (from the subsequence), we increase $m_n(\xi)$ for all $\xi$ in {$\Xi^\star$}, we have that $m_n(\xi) \to \infty$ as $n\to \infty$ for all $\xi\in \Xi^\star$.
Therefore, 
$ \lim_{n\to+\infty} \max_{\xi \in \Xi^{\star}} \epsilon_{\xi,m_n(\xi)} = 0$ holds almost surely, which contradicts \eqref{eq:nonstoppingcondition-Xistar}.
\\

 We now prove that $\hat{\xi}$ satisfies \eqref{eq:PACproblem}. For clarity, we remove the index $n$ from $m_n(\xi).$ 
 Defining $A(\xi) = \left  \{ | \overline{Z(\xi)}_{m(\xi)} - \mathbb{E}[Z(\xi)]  | \le c_{\xi,m(\xi)} \right \}$ for all $\xi$ in $\Xi$, we proceed as in \eqref{eq:probatotaledelta} to obtain
\begin{equation*}
\mathbb{P} \left(A(\xi) \right) \ge 1 - \lambda/ \# \Xi. 
\end{equation*}
Thus, by a union bound argument
\begin{equation*}
\mathbb{P} \left( \cap_{\xi \in \Xi} A(\xi)  \right) \ge 1 - \lambda. 
\end{equation*}
It remains to prove that $\cap_{\xi \in \Xi}  A(\xi)$ implies $\mathbb{E}[Z(\xi^\star)] - \mathbb{E}[Z(\hat{\xi})] \le \tau  |\mathbb{E}[Z(\xi^\star)] |$ in order to prove  that $\hat \xi$ satisfies \eqref{eq:PACproblem}. In the rest of the proof, we suppose that $\cap_{\xi \in \Xi} A(\xi)$ holds. 
First, for all $\xi \notin \Xi_n$, using \eqref{eq:beta+-def1}, we have
\begin{equation}
\mathbb{E}[Z(\xi)] \le \beta^+_{\xi,m(\xi)} < \max_{\nu \in \Xi}\beta^-_{\nu,m(\nu)} \le \max_{\nu \in \Xi} \mathbb{E}[Z(\nu)] = \mathbb{E}[Z(\xi^\star)],
\end{equation}
that implies $\xi^\star \in \Xi_n$. If the stopping condition is $\# \Xi_n = 1$, we then have $\hat{\xi} = \xi^\star$. 
If the stopping condition is $\max_{\xi \in \Xi_n} ~ \epsilon_{\xi,m(\xi)} \le \frac{\tau}{2+\tau} < 1$, it means that, for all $\xi$ in $\Xi_n$, $\epsilon_{\xi,m(\xi)} \le \frac{\tau}{2+\tau} < 1$. Then for  all $\xi \in \Xi_n$, using Proposition \ref{prop:EBStopguarantee} with $\epsilon = \epsilon_{\xi,m(\xi)} < 1$ and $\delta = \lambda / \# \Xi < 1$ and the fact that $\cap_{\xi \in \Xi} A(\xi)$ holds, we obtain that    the  estimate $\hat{\mathbb{E}}_{{m(\xi)}}[Z(\xi)]$ satisfies 
\begin{equation}
\left| \hat{\mathbb{E}}_{{m(\xi)}}[Z(\xi)] - \mathbb{E}[Z(\xi)] \right| \le \epsilon_{\xi,m(\xi)} | \mathbb{E}[Z(\xi)] |.
\label{eq:relativeprecision}
\end{equation}
We have that $\epsilon_{\xi,m(\xi)}<1$ and \eqref{eq:beta+-def2} hold for all $\xi  \in \Xi_n$. In particular, since $\hat \xi, \xi^\star \in \Xi_n$  we get 
\begin{equation*}
\begin{split}
\mathbb{E} \left[ Z(\xi^\star) \right] - \mathbb{E} [Z(\hat \xi)] & \le \mathbb{E} \left[ Z(\xi^\star) \right] - \dfrac{\hat{\mathbb{E}}_{m(\hat \xi)} [Z(\hat \xi)]}{1 + s(\hat{\xi})\epsilon_{\hat{\xi},m(\hat{\xi})}} 
 \le \mathbb{E} \left[ Z(\xi^\star) \right] - \dfrac{\hat{\mathbb{E}}_{m(\xi^\star)} \left[ Z( \xi^\star) \right]}{1 + s(\hat{\xi})\epsilon_{\hat{\xi},m(\hat{\xi})}} \\
& \le \mathbb{E} \left[ Z(\xi^\star) \right] - \dfrac{1 - s(\xi^\star)\epsilon_{\xi^\star,m(\xi^\star)}}{1 + s(\hat{\xi})\epsilon_{\hat{\xi},m(\hat{\xi})}} \mathbb{E} \left[ Z(\xi^\star) \right] \\
& = \frac{s(\xi^\star)\epsilon_{\xi^\star,m(\xi^\star)} + s(\hat{\xi})\epsilon_{\hat{\xi},m(\hat{\xi})}}{1 + s(\hat{\xi})\epsilon_{\hat{\xi},m(\hat{\xi})}} \mathbb{E} \left[ Z(\xi^\star) \right].
\end{split}
\end{equation*}
Then we deduce
\begin{equation}
\mathbb{E} \left[ Z(\xi^\star) \right] - \mathbb{E} [Z(\hat \xi)]  \le \frac{2 \tau/(2 + \tau)}{1 - \tau/(2 + \tau)} | \mathbb{E}[Z(\xi^\star)] | = \tau | \mathbb{E}[Z(\xi^\star)] | ,\label{final-guarantee}
\end{equation}
which ends the proof.\qed
\end{proof}
\begin{remark}
As for Algorithm \ref{alg:nonadaptatif} (see Remark \ref{rem:alg-nonadapt-positive}), if $\mathbb{E}[Z(\xi^\star)] > 0$, we can set $\epsilon= \frac{\tau}{2 - \tau}$ in Algorithm \ref{alg:bandit} to lower the complexity and still guarantee that $\hat \xi$ satisfies \eqref{eq:PACproblem}. 
\end{remark}
\begin{remark}
A variant of Algorithm \ref{alg:bandit} using batch sampling would generate several samples of $Z(\xi)$ at step 4.  
The result of Proposition \ref{prop:banditresult} also holds for the algorithm with batch sampling. An optimal choice of the number of samples should depend on sampling costs. 
\end{remark}
\textcolor{black}{A theoretical upper bound for the complexity of Algorithm \ref{alg:nonadaptatif} directly follows from Proposition \ref{prop:complexityMhoeffding} or Proposition \ref{prop:complexityMbernstein}. The analysis of the adaptive Algorithm \ref{alg:bandit} is not straightforward. The complexity of algorithms involving similar adaptive sampling strategies in absolute precision has been studied in several papers for different stopping rules: \cite{kaufmann2013information} provides an upper bound for the complexity that holds in probability while \cite{Kaufmann2016} provides a lower bound for this complexity. In these papers the authors were searching for $m$ variables in the set $\mathcal{S}_{m,\tau}$ of $(\tau,m)$-optimal random variables that are random variables whose mean $\mathbb{E}[Z(\xi)]$ is greater than $\mathbb{E}[Z(\xi_m)] - \tau$, where $\mathbb{E}[Z(\xi_m)]$ is the $m^{th}$ highest mean. In the case $m = 1$, finding one element of $\mathcal{S}_{1,\tau}$ with a given probability $1 - \lambda$, is equivalent to find $\hat{\xi}$ that satisfies \eqref{eq:discreteabsolutePAC} with $\tau_{abs} = \tau$. The critical point for a theoretical complexity  analysis of our adaptive algorithm is to adapt these analyses to relative precision. This could be the concern of a future work. } \\



\section{Numerical results}\label{sec:results}
In this section, we propose a numerical study of the behaviour of our algorithms on \textcolor{black}{two} toy examples.  \\

\subsection{First test case}
\textcolor{black}{We first apply our algorithms to the framework of bounded random variables. We use the concentration inequality bounds from Theorem \ref{th:bernstein}. W}e consider the set of random variables 
$Z(\xi) = f(\xi) + U(\xi), \; \xi \in \Xi$, 
where $f(\xi) = \sin(\xi) + \sin(10\xi/3)$, 
the $U(\xi)$ are i.i.d. uniform random variables over $(-1/20,1/20)$, and $\Xi = \{ 3 + 4i/100 : 0\le i\le 100\}$. The numerical results are obtained with the sequence $(d_m)_{m \ge 1}$ defined by \eqref{assumption:dmforme} with $p=2$. We set $\tau =0.1 $ and $ \lambda = 0.1$. \\

We first compare our algorithms with two existing ones. The first one is the Median Elimination (ME) algorithm (see \cite{Evendar2002} for a description of the algorithm), that solves problem \eqref{eq:discreteabsolutePAC}. We take $\tau_{abs} = \tau | \mathbb{E}[Z(\xi^\star)] |$ to ensure ME  algorithm provides a solution that also guarantees \eqref{eq:discreterelativePAC}. Of course, this is not feasible in practice without knowing the  solution of the optimization problem or at least a bound of $| \mathbb{E}[Z(\xi^\star)] |$.    The second algorithm which we compare to our algorithms is the UCB-V Algorithm (see \cite[Section 3.1]{audibert2009exploration}). It consists in only resampling the random variable whose confidence interval has the highest upper bound. To do so, we replace Steps 3 to 6 of Algorithm \ref{alg:bandit} by: 
\begin{align*}
&\text{Compute $\xi^+ = \arg \max_{\xi \in \Xi_n} ~ \beta^+_{\xi,m(\xi)}$},\\
&\text{Sample $Z(\xi^+)$, increment $m(\xi^+)$ and update $\epsilon_{\xi^+,m(\xi^+)}$.}
\end{align*}
We choose these algorithms to perform the comparison because i) ME Algorithm ensures theoretical guarantees similar to ours (although in absolute precision) and ii) the UCB-V Algorithm is optimal, in a sense that we will define later, for solving the optimization problem \eqref{eq:problem}. \\
 
We illustrate on Figure \ref{fig:compMEadaptivenonadaptive} the behavior of algorithms. The results that we show on Figure \ref{fig:compMEadaptivenonadaptive} are the ones of a single run of each algorithm. On the left scale, we plot the estimates $\hat{\mathbb{E}}_{m(\xi)}[Z(\xi)]$ as defined in \eqref{eq:newestimate} and the associated confidence intervals $[\beta^-_{\xi,m(\xi)},\beta^+_{\xi,m(\xi)} ]$ of level $1-d_{m(\xi)}$ given by \eqref{eq:betaconfint}. The estimates and confidence intervals for $\xi \in \Xi_n$ are drawn in blue, while the ones for $\xi \notin \Xi_n$ are drawn in red. On the right scale, we plot the number of samples $m(\xi)$ generated for each $\xi\in \Xi$. We observe that Algorithm \ref{alg:nonadaptatif} samples too much the random variables with low expectation in absolute value. This is responsible for the three peaks on $m(\xi)$ observed on Figure \ref{fig:sub-first}. Algorithm \ref{alg:bandit} avoids this drawback as it does not try to reach the condition $\epsilon_{\xi,m(\xi)} < 1$ for all random variables. The UCB-V algorithm samples mostly the two random variables with highest expectations (more than 99\% of the samples are drawn from these random variables). Other random variables are not sufficiently often sampled for reaching rapidly the stopping condition based on confidence intervals. The Median Elimination Algorithm oversamples all random variables in comparison with other algorithms. 
\begin{figure}[h]
\begin{subfigure}{.5\textwidth}
  \centering
  \includegraphics[width=.9\linewidth]{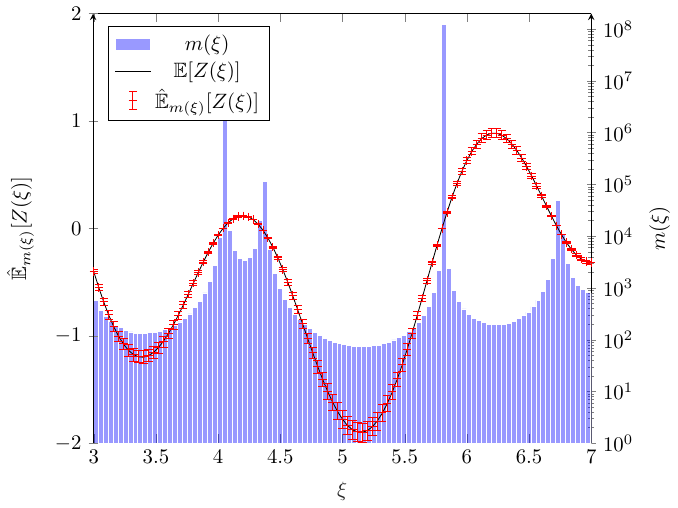}
  \caption{Algorithm \ref{alg:nonadaptatif}}
  \label{fig:sub-first}
\end{subfigure}
\begin{subfigure}{.5\textwidth}
  \centering
  \includegraphics[width=.9\linewidth]{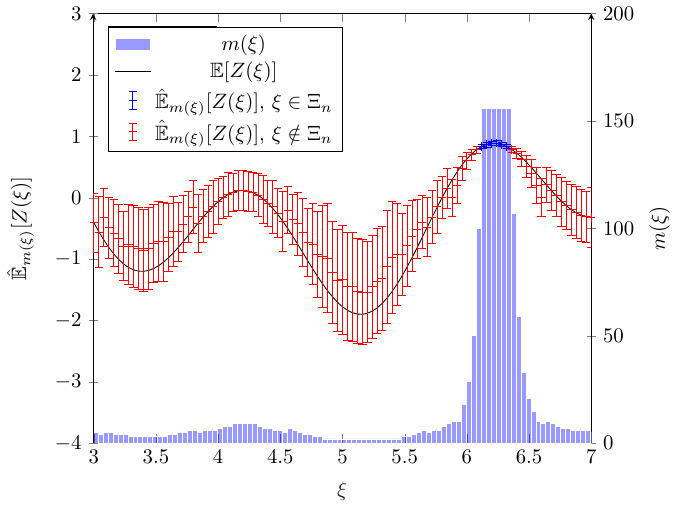}
  \caption{Algorithm \ref{alg:bandit}}
  \label{fig:sub-second}
\end{subfigure}
~
\newline
~
\begin{subfigure}{.5\textwidth}
  \centering
  \includegraphics[width=.9\linewidth]{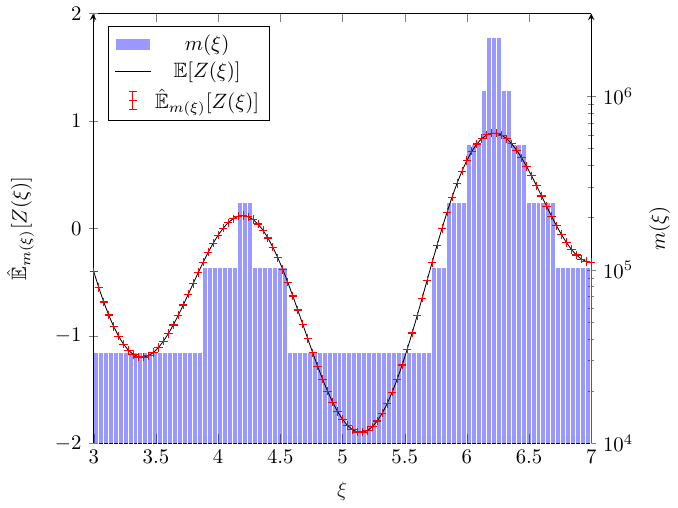}
  \caption{ME Algorithm}
  \label{fig:sub-third}
\end{subfigure}
\begin{subfigure}{.5\textwidth}
  \centering
  \includegraphics[width=.9\linewidth]{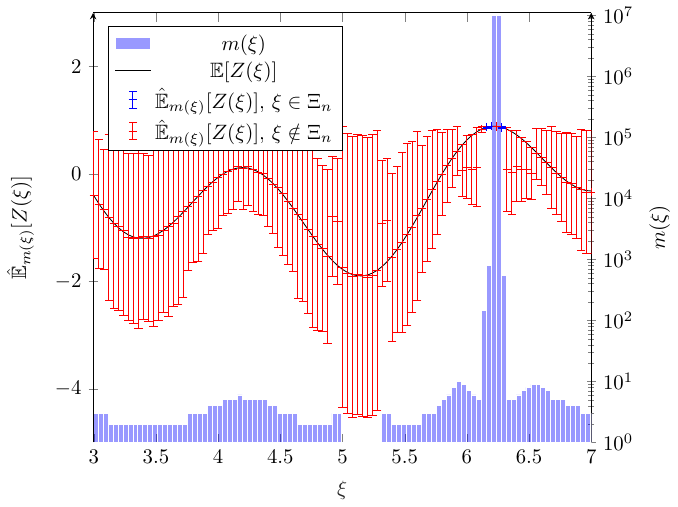}
  \caption{UCB-V Algorithm}
  \label{fig:sub-fourth}
\end{subfigure}
\caption{Final state of each algorithm after one run with $\tau = 0.1$, $\lambda = 0.1$ and $\tau_{abs} = \tau | \mathbb{E}[Z(\xi^\star)] |$ for ME Algorithm. Left scale : values of the estimates $\hat{\mathbb{E}}_{m(\xi)}[Z(\xi)]$ together with the associated confidence intervals of level $1 - d_{m(\xi)}$. Right scale : values of $m(\xi)$.}
\label{fig:compMEadaptivenonadaptive}
\end{figure}
\paragraph{Complexity.}
To perform a quantitative comparison with existing algorithms in the case of costly sampling, a relevant complexity measure is the total number of samples generated after a single run of the algorithm
$$\mathcal{M} = \sum_{\xi \in \Xi} m(\xi).$$
Table \ref{tab:complexity} shows the average complexity  $\mathbb{E}(\mathcal{M})$ estimated using $30$ independent runs of each algorithm. We observe that the expected complexity of Algorithm \ref{alg:bandit} is far below the one of the other algorithms. It means that, for the complexity measure $\mathbb{E}(\mathcal{M})$, the adaptive algorithm we have proposed  performs the best. \\

\begin{table}[H]
\centering
\begin{tabular}{|c|c|c|c|}
\hline 
 ME Alg. & Alg. \ref{alg:nonadaptatif} & Alg. \ref{alg:bandit} & UCB-V Alg. \\ 
\hline  \hline
$2.0 \cdot 10^7$  & $1.4 \cdot 10^8$ & $1.9 \cdot 10^3$ & $1.9 \cdot 10^8$ \\ 
\hline 
\end{tabular} 
\caption{Average complexity $\mathbb{E}(\mathcal{M})$, estimated using 30 runs for each algorithm, with $\tau =0.1$, $\lambda = 0.1$ and $\tau_{abs} = \tau | \mathbb{E}[Z(\xi^\star)] |$ for ME algorithm. }
\label{tab:complexity}
\end{table}

We now compare the four algorithms in terms of expected runtime, that is a measure of complexity taking into account the sampling cost and the cost of all other operations performed by the algorithms. 
Denoting by $t^\star$ the time (assumed constant) for generating one sample from a distribution, the runtime of an algorithm is a random variable  $T = \mathcal{M} t^\star + \mathcal{N}$, where $\mathcal{M} t^\star$
is the sampling time, and $\mathcal{N}$ is the (random) time taken by all other operations. The expected runtime is then 
$\mathbb{E}(T) = \mathbb{E}(\mathcal{M}) t^\star + \mathbb{E}(\mathcal{N})$. From the values of $\mathbb{E}(\mathcal{N})$ and $\mathbb{E}(\mathcal{M})$, estimated over 30 runs of the algorihms, we deduce Table \ref{tab:runtimecostly}, which shows the average runtime $\mathbb{E}(T)$ for different values of $t^\star$. We  observe that Algorithm \ref{alg:bandit} has the smallest average runtime whatever the sampling cost. The first line corresponds to $\mathbb{E}(\mathcal{N})$ and shows that Algorithm \ref{alg:bandit} performs the best when sampling cost $t^\star=0$ (or negligible). The
 impressive gain for large sampling costs $t^\star$ is due to the small value of the average number of samples $\mathbb{E}(\mathcal{M})$ required by the algorithm.
\begin{table}[H]
\centering
\begin{tabular}{|c|c|c|c|c|}
\hline 
$t^\star$ & ME Alg. & Alg. \ref{alg:nonadaptatif} & Alg. \ref{alg:bandit} & UCB-V Alg. \\ 
\hline 
$ 0$ & $2.5911$ & $5.0 \cdot 10^1$  & $3 \cdot 10^{-3}$ & $1.2 \cdot 10^3$ \\ 
\hline 
$ 10^{-6}$ & $2.0 \cdot 10^1$ & $1.9 \cdot 10^2$ & $3.8 \cdot 10^{-3}$ & $1.4 \cdot 10^{3}$ \\ 
\hline 
$ 10^{-4}$ & $2.0 \cdot 10^3$ & $1.9 \cdot 10^4$ & $3.8 \cdot 10^{-1}$ & $1.4 \cdot 10^{5}$ \\ 
\hline 
$ 10^{-2}$ & $2.0 \cdot 10^5$ & $1.9 \cdot 10^6$ & $3.8 \cdot 10^{1}$ & $1.4 \cdot 10^{7}$ \\ 
\hline 
$1$ & $2.0 \cdot 10^7$ & $1.9 \cdot 10^8$ & $3.8 \cdot 10^{3}$ & $1.4 \cdot 10^{9}$ \\ 
\hline 
\end{tabular} 
\caption{Estimated runtime $T$ (in seconds) for different values of $t^\star$, with $\tau = \lambda = 0.1$ and $\tau_{abs} = \tau | \mathbb{E}[Z(\xi^\star)] |$ for ME algorithm.}
\label{tab:runtimecostly}
\end{table}

\paragraph{Behavior of Algorithm \ref{alg:bandit}.}
Now, we illustrate  the behavior of  Algorithm \ref{alg:bandit} on Figure \ref{fig:evolutionwithn} and show the evolution with $n$ of $\Xi_n$ and $m_n(\xi)$ for a single run of Algorithm \ref{alg:bandit}, where $m_n(\xi)$ denotes the total number of samples from $Z(\xi)$ generated from iteration $1$ to iteration $n$.  When $n=1$, the algorithm has sampled every random variable once, which is not enough to distinguish some confidence intervals. So $\Xi_1$ is equal to $\Xi$. When $n=10$, some confidence intervals can be distinguished and the algorithm has identified two groups of values where a quasi-maximum could be. These two groups correspond to the two groups of random variables in $\Xi_{10}$. When $n=21$, the algorithm has identified the main peak of the function. However, the values of $\epsilon_{\xi,m(\xi)}$ for $\xi$ in $\Xi_{21}$ are not small enough for the algorithm to stop. Then the algorithm continues sampling the random variables in $\Xi_n$, updating $\Xi_n$ when it is necessary. $\epsilon_{\xi,m(\xi)}$ for $\xi$ in $\Xi_n$ decreases since $m(\xi)$ is increasing for these values of $\xi$ and the algorithm stops at $n=214$ when $\underset{\xi \in \Xi_{211}}{\max} ~ \epsilon_{\xi,m(\xi)} < \frac{\tau}{2+\tau}$. \\

Figure \ref{fig:complexitytaulambda} shows the influence of $\tau$ and $\lambda$ on the average complexity $\mathbb{E}(\mathcal{M})$ of Algorithm \ref{alg:bandit}. We observe that $\tau $ has a much bigger impact than $\lambda$. This observation is consistant with the impact of 
 $\epsilon = \tau/(2+\tau)$ and $\delta = \lambda/\#\Xi$ on the expected number of sampling $\mathbb{E}(M)$ to build an estimate $\hat{\mathbb{E}}_{M}[Z]$ of $\mathbb{E}[Z(\xi)]$ with relative precision $\epsilon$ with probability $1-\delta$ (see Remark \ref{rem:complexity}).
\begin{figure}[H]
\begin{subfigure}{.5\textwidth}
  \centering
  \includegraphics[width=.9\linewidth]{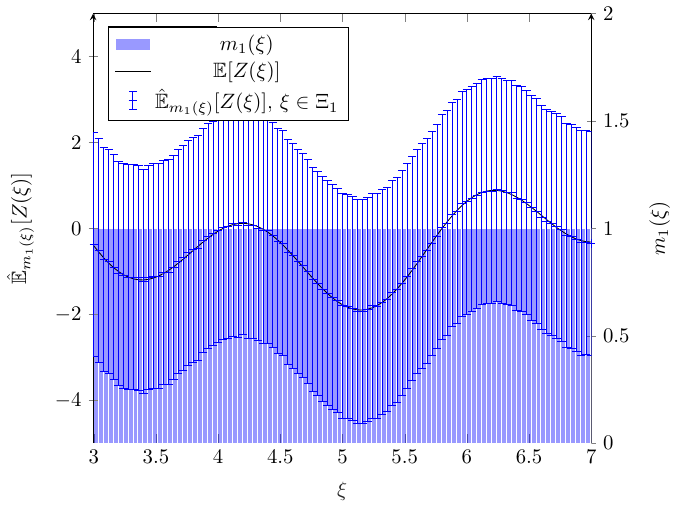}
  \caption{$n=1$}
  \label{fig1:sub-first}
\end{subfigure}
\begin{subfigure}{.5\textwidth}
  \centering
  \includegraphics[width=.9\linewidth]{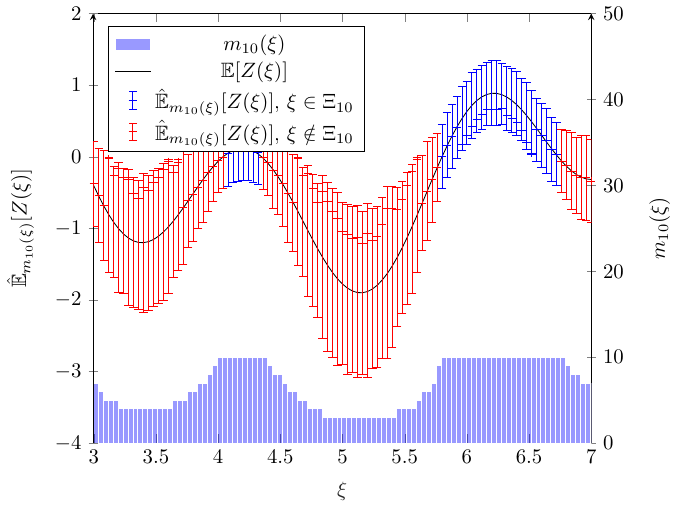}
  \caption{$n=10$}
  \label{fig1:sub-second}
\end{subfigure}
\\
\begin{subfigure}{.5\textwidth}
  \centering
  \includegraphics[width=.9\linewidth]{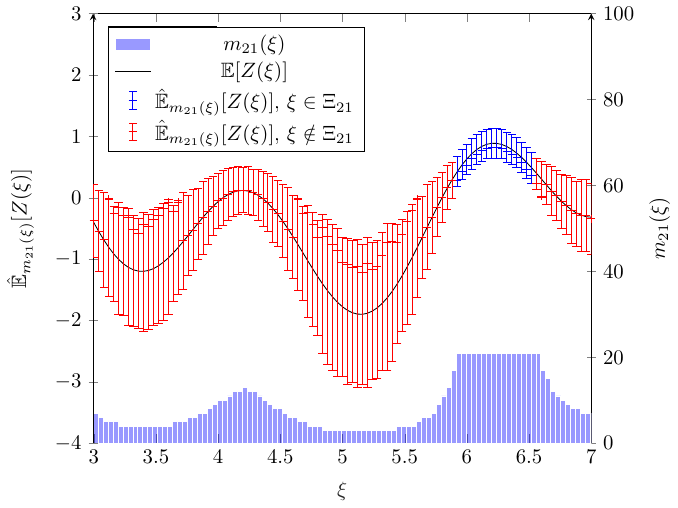}
  \caption{$n=21$}
  \label{fig1:sub-third}
\end{subfigure}
\begin{subfigure}{.5\textwidth}
  \centering
 \includegraphics[width=.9\linewidth]{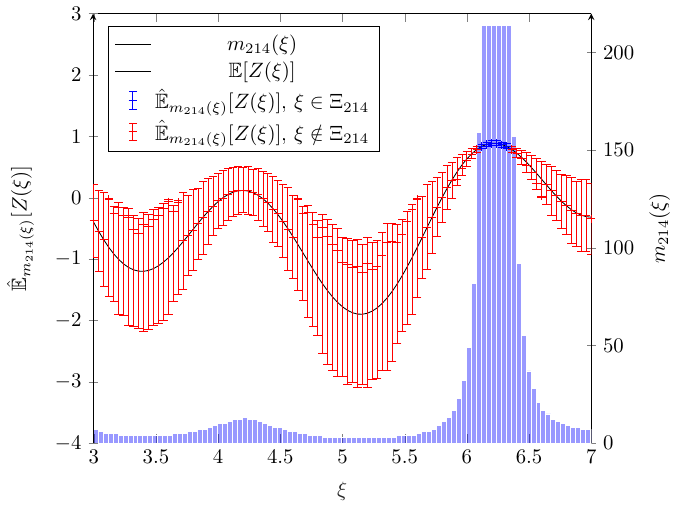}
  \caption{$n=214$}
  \label{fig1:sub-fourth}
\end{subfigure}
\caption{Evolution of $\Xi_n$ and number of samples  $m_n(\xi)$ with  $n$ for Algorithm \ref{alg:bandit} with $\tau = \lambda = 0.1$. }
\label{fig:evolutionwithn}
\end{figure}

\begin{figure}[h]
\centering
\includegraphics[scale=0.6]{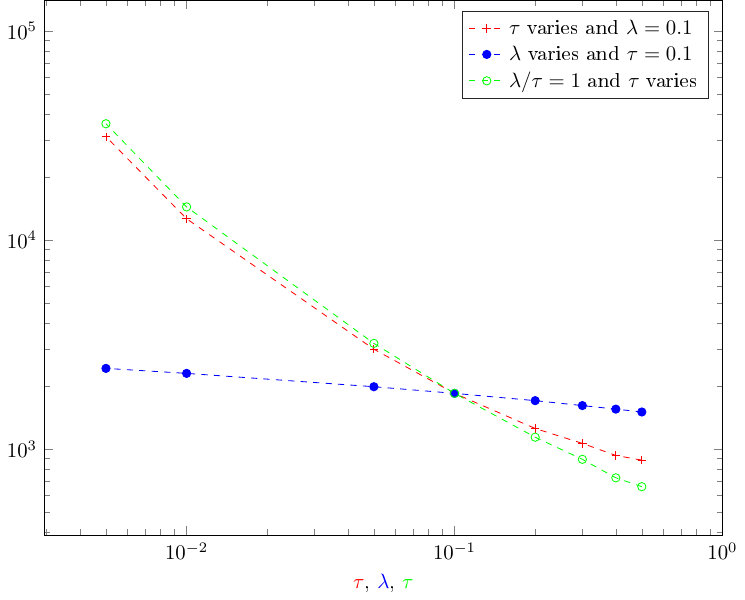}
\caption{Average complexity $\mathbb{E}(\mathcal{M})$ of Algorithm \ref{alg:bandit} with respect to $\tau$ and $\lambda$ (in log-log scale).}
\label{fig:complexitytaulambda}
\end{figure} 

\subsection{\textcolor{black}{Second test case}}
\textcolor{black}{As in \cite{kaufmann2013information}, we consider Bernoulli random variables $Z(\xi), ~\xi \in \{1, \ldots,K \}$, with $K$ varying from $10$ to $60$. Concentration inequality bounds from Theorem \ref{th:hoeffding} are used. The values of $\mathbb{E}[Z(\xi)]$ for $\xi \in \{1, \ldots,K \}$ are drawn uniformly in $[0,1]$. We first estimate the probability of failure of Algorithm \ref{alg:bandit} over 1000 draws of a $K$-tuple for the means, this for each value of $K$. It corresponds to the probability of returning $\hat{\xi}$ such that $\mathbb{E}[Z(\xi^\star)] - \mathbb{E}[Z(\hat{\xi})] > \tau | \mathbb{E}[Z(\xi^\star)] |$. The parameters used for these runs are $\tau = 0.5$, $\lambda = 0.5$ and $p=2$. On these runs, and on all the runs of Algorithm \ref{alg:bandit} we performed in this paper, the estimated probability of failure is zero, which means that Algorithm \ref{alg:bandit} did not fail. This could be explained by the use of concentration inequalities that are not very sharp and a very conservative stopping criterion. It is mostly due to the union bound argument used in our proof, see, e.g., the one from Proposition \ref{prop:banditresult}.} \\

\textcolor{black}{We observe on Figure \ref{fig:compK_tau_lambda} the complexity $\mathbb{E}(\mathcal{M})$ plotted as a function of $K$ for different values of $\tau$ and $\lambda$. Again for this case, we observe that $\tau$ has more influence than $\lambda$ on the complexity $\mathbb{E}(\mathcal{M})$. Also $\mathbb{E}(\mathcal{M})$ does not deteriorate when the number $K$ of parameters in $\Xi$ increases.}


\begin{figure}[h]
\centering
\includegraphics[scale=0.6]{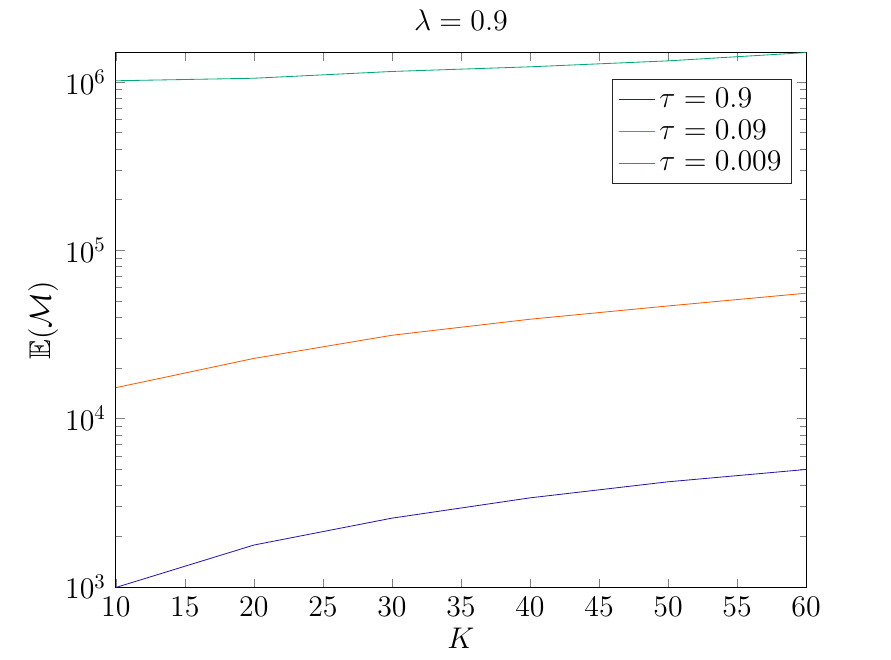}  \\
\includegraphics[scale=0.6]{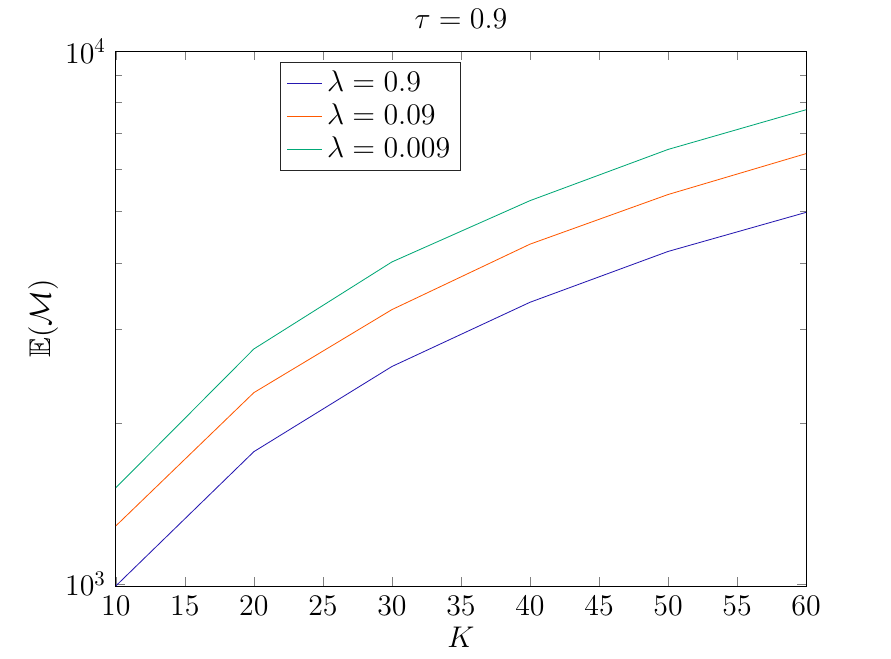}
\caption{\textcolor{black}{$\mathbb{E}(\mathcal{M})$ averaged for the 1000 draws of the means. (top) $\tau$ varies and $\lambda = 0.9$. (bottom) $\lambda$ varies and $\tau = 0.9$.}}
\label{fig:compK_tau_lambda}
\end{figure}

\subsection{\textcolor{black}{Summary of performance validation}}
\textcolor{black}{We considered in this paper bandit problems where the random variables are costly to sample. It is not a traditional framework. Our adaptive algorithm clearly outperforms the others by several orders of magnitude in terms of sampling complexity (see Table \ref{tab:complexity}). Therefore the more costly the sampling, the more efficient our adaptive strategy (see Table \ref{tab:runtimecostly}). Sampling is costly in two typical situations: the samples of $Z(\xi)$ are drawn from a high-dimensional distribution or $Z(\xi) = f_\xi(X)$ with $X$ a random variable (not necessarily costly to sample) and $f_\xi$ a function costly to evaluate. The latter situation is typical for the computer experiment community.} \\

\textcolor{black}{On both Figure \ref{fig:complexitytaulambda} and Figure \ref{fig:compK_tau_lambda}, we observed the high impact of $\tau$, in comparison of the impact of $\lambda$, on the complexity $\mathbb{E}[\mathcal{M}]$. It allows us to use the adaptive algorithm with sets $\Xi$ of high cardinal even with small values of $\lambda$.} \\

\bibliographystyle{spmpsci}     
\bibliography{biblio}

\appendix

\section{Intermediate results} 
\label{sec:intermerdiate}
Here we provide  intermediate results used thereafter for the proof of Proposition \ref{prop:complexityMbernstein} in Section \ref{sec:prop3.4}. 
We first recall a version of Bennett's inequality from \cite[Lemma 5]{audibert2009exploration}.
\begin{lemma}
Let $U$ be a random variable defined on $(\Omega, \mathcal{F}, \mathbb{P})$ such that $U \le b$  almost surely, with $b \in \mathbb{R}$. Let $U_1, \ldots, U_m$ be i.i.d. copies of $U$ and $\overline{U}_{\ell} = \frac{1}{\ell}\sum_{i=1}^{\ell} U_i$. For any $x>0$, it holds, with probability at least $1 - \exp(-x)$, 
simultaneously for all $1 \le \ell \le m$
\begin{equation}
\ell \left( \overline{U}_\ell - \mathbb{E} \left[ U \right] \right) \le \sqrt{2m \mathbb{E} \left[ U^2 \right] x} +  b_+ x / 3, 
\label{eq:lemma1}
\end{equation}
with $b_+ = \max(0, b)$.
\label{lemma:base}
\end{lemma}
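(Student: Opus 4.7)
The plan is to derive this uniform-in-$\ell$ Bennett-type deviation bound through an exponential supermartingale argument combined with Doob's maximal inequality, rather than proving it by a direct Chernoff bound (which would only give a one-shot, single-$\ell$ statement). First, I would fix $\lambda > 0$ to be chosen later, introduce the centered variables $W_i = U_i - \bbE[U]$, and consider the partial sums $S_\ell = \sum_{i=1}^{\ell} W_i$. Setting $\psi(\lambda) = \log \bbE[\exp(\lambda W_1)]$, the process $M_\ell = \exp(\lambda S_\ell - \ell \psi(\lambda))$ is a non-negative martingale with $\bbE[M_\ell] = 1$, so Doob's maximal inequality yields $\bbP(\max_{1 \le \ell \le m} M_\ell \ge \exp(x)) \le \exp(-x)$. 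On the complementary event, which has probability at least $1 - \exp(-x)$, we have $\lambda S_\ell \le \ell \psi(\lambda) + x$ simultaneously for all $1 \le \ell \le m$.

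Next, I would upper bound the log-MGF $\psi$ in a Bernstein-friendly form. Because $W_1 \le b_+$ almost surely (we may assume $b \ge \bbE[U]$, otherwise the statement is vacuous), the classical one-sided MGF estimate for variables bounded above gives
\begin{equation*}
\psi(\lambda) \le \frac{\lambda^2 \bbE[U^2]/2}{1 - \lambda b_+/3}, \qquad 0 < \lambda < 3/b_+.
\end{equation*}
The key step that introduces $m$ (rather than $\ell$) on the right-hand side of the target inequality is then the crude but essential bound $\ell \psi(\lambda) \le m \psi(\lambda)$. Dividing by $\lambda$, on the good event we obtain, for every $1 \le \ell \le m$,
\begin{equation*}
S_\ell = \ell(\overline{U}_\ell - \bbE[U]) \le \frac{m \bbE[U^2]\, \lambda / 2}{1 - b_+ \lambda/3} + \frac{x}{\lambda}.
\end{equation*}

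Finally, I would optimize over $\lambda$ by choosing $\lambda = \sqrt{2x/(m\bbE[U^2])}$ (the minimizer in the absence of the $b_+$ correction), verify that this value lies in the admissible range, and use the standard algebraic manipulation that the minimum of $a\lambda/(1 - c\lambda) + x/\lambda$ is bounded by $2\sqrt{ax} + 2cx$ (up to the factor of $2$ versus $1/3$ that comes from carefully tracking constants). This produces the claimed Bernstein-type inequality $S_\ell \le \sqrt{2m \bbE[U^2]\, x} + b_+ x/3$, valid simultaneously for all $1 \le \ell \le m$ with probability at least $1 - \exp(-x)$. The main obstacle is the uniform-in-$\ell$ aspect: a naive union bound over $\ell$ would lose a $\log m$ factor, so the martingale/Doob step is the essential device, while the MGF bound and the Bernstein-style optimization in $\lambda$ are routine once the simultaneous inequality is in place.
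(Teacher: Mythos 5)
The paper does not actually prove this lemma: it is quoted from Audibert, Munos and Szepesv\'ari (their Lemma~5) without proof, so the only fair comparison is with the standard proof of that cited result. Your architecture is exactly that standard proof --- the exponential martingale $M_\ell=\exp(\lambda S_\ell-\ell\psi(\lambda))$, Doob's maximal inequality to get the statement simultaneously for all $\ell\le m$, the crude bound $\ell\psi(\lambda)\le m\psi(\lambda)$ (valid since $\psi\ge0$ by Jensen), and an optimization in $\lambda$. The martingale/Doob step, which you correctly identify as the essential device for uniformity in $\ell$, is right.

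Two steps as written do not hold up, though both are repairable. First, the claim that $W_1=U_1-\bbE[U]\le b_+$ almost surely is false in general (take $U\in[-10,1]$ with $\bbE[U]=-5$; then $W_1$ can reach $6>1=b_+$), and the classical one-sided MGF estimate applied to the \emph{centered} variable would produce $\var(U)$ and $b-\bbE[U]$, not $\bbE[U^2]$ and $b_+$. The inequality $\psi(\lambda)\le \frac{\lambda^2\bbE[U^2]/2}{1-\lambda b_+/3}$ is nevertheless true, but it must be derived from the uncentered variable: $\bbE[e^{\lambda W_1}]=e^{-\lambda\bbE[U]}\,\bbE[e^{\lambda U}]\le \exp\!\left(\bbE[e^{\lambda U}-1-\lambda U]\right)=\exp\!\left(\lambda^2\,\bbE[U^2 g(\lambda U)]\right)$ with $g(u)=(e^u-1-u)/u^2$ nondecreasing, whence $g(\lambda U)\le g(\lambda b_+)\le \frac{1}{2(1-\lambda b_+/3)}$. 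Second, the choice $\lambda=\sqrt{2x/(m\bbE[U^2])}$ does not give the stated constants: plugging it in yields $\sqrt{2m\bbE[U^2]x}+\frac{b_+x/3}{1-b_+\lambda/3}$, and the bound you quote for $\min_\lambda\{a\lambda/(1-c\lambda)+x/\lambda\}$, namely $2\sqrt{ax}+2cx$, would only give $2b_+x/3$ in the second term, not $b_+x/3$. The clean fix is the substitution $\lambda=\mu/(1+c\mu)$ with $c=b_+/3$, which turns the objective into $a\mu+x/\mu+cx$ and, at $\mu=\sqrt{x/a}=\sqrt{2x/(m\bbE[U^2])}$, gives exactly $\sqrt{2m\bbE[U^2]x}+b_+x/3$. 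With those two corrections your proof is complete.
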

Now, the following result provides a bound with high probability for the estimated variance of an i.i.d. sequence of bounded random variables.
\begin{lemma}
Let $X$ be a  bounded random variable defined on $(\Omega, \mathcal{F}, \mathbb{P})$, such that $a \le X  \le b$  almost surely, with $a<b$ two real numbers. Let $X_1, \ldots, X_m$ be i.i.d. copies of $X$ and $\overline{V}_m = \dfrac{1}{m} \sum_{i=1}^m (X_i - \overline{X}_m)^2$ where $\overline{X}_m = \frac{1}{m}\sum_{i=1}^m X_i$. Then, for any $x>0$
\begin{equation}
\mathbb{P} \left( \overline{V}_m \le \mathbb{V}[X] + \sqrt{2\mathbb{V}[X] \dfrac{(b-a)^2x}{m}} + \dfrac{x(b-a)^2}{3m} \right) \ge 1 - \exp(-x). 
\end{equation}  
\label{lemma:varianceempirique}
\end{lemma}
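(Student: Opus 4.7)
The plan is to apply Lemma~\ref{lemma:base} (Bennett's inequality) to a well chosen i.i.d. sequence derived from the $X_i$'s, after first replacing the empirical variance by an upper bound that is a sum of independent terms.

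The key observation is that the sample mean minimizes the sum of squared deviations, so
\begin{equation*}
\overline{V}_m = \frac{1}{m}\sum_{i=1}^{m}(X_i - \overline{X}_m)^2 \le \frac{1}{m}\sum_{i=1}^{m}(X_i - \bbE[X])^2 =: \overline{U}_m.
\end{equation*}
This bypasses the need to deal with the coupling between the $X_i$'s introduced by $\overline{X}_m$: the terms $U_i := (X_i - \bbE[X])^2$ are genuinely i.i.d.\ copies of $U := (X - \bbE[X])^2$.

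Next I would collect the three facts about $U$ needed to invoke Lemma~\ref{lemma:base}. Since $a \le X \le b$ and $\bbE[X] \in [a,b]$, we have $|X - \bbE[X]| \le b-a$, hence $0 \le U \le (b-a)^2$, so the bound $b_+$ in the lemma is $(b-a)^2$. Clearly $\bbE[U] = \bbV[X]$. Finally, using $U \le (b-a)^2$ again,
\begin{equation*}
\bbE[U^2] = \bbE\bigl[(X-\bbE[X])^4\bigr] \le (b-a)^2 \, \bbE\bigl[(X-\bbE[X])^2\bigr] = (b-a)^2 \, \bbV[X].
\end{equation*}

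Applying Lemma~\ref{lemma:base} to $U$ at the single index $\ell = m$, we obtain with probability at least $1-\exp(-x)$
\begin{equation*}
m(\overline{U}_m - \bbV[X]) \le \sqrt{2m \, (b-a)^2 \bbV[X] \, x} + \frac{(b-a)^2 x}{3},
\end{equation*}
and dividing by $m$ and using $\overline{V}_m \le \overline{U}_m$ yields the claimed inequality. There is no real obstacle here; the only slightly non-routine point is the initial step of dominating $\overline{V}_m$ by $\overline{U}_m$, which lets us reduce the statement to a one-sided Bennett bound applied to a bounded non-negative i.i.d.\ sum whose second moment is controlled by $(b-a)^2 \bbV[X]$.
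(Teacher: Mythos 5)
Your proof is correct and follows essentially the same route as the paper: both apply Lemma~\ref{lemma:base} at $\ell=m$ to $U=(X-\bbE[X])^2$, bound $\bbE[U^2]\le (b-a)^2\,\bbV[X]$ via $U\le (b-a)^2$, and dominate $\overline{V}_m$ by $\frac1m\sum_i(X_i-\bbE[X])^2$. The only cosmetic difference is that you justify this last domination by the minimizing property of the sample mean, whereas the paper uses the exact identity $\overline{U}_m=\overline{V}_m+(\overline{X}_m-\bbE[X])^2$; these are the same observation.
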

\begin{proof} Let us define $U= (X - \mathbb{E}[X])^2$ which satisfies $U \le (b-a)^2$ almost surely. Applying Lemma \ref{lemma:base} with $U$ defined previously with $\ell=m$ gives for any $x>0$ 
$$
\mathbb{P} \left( m \left( \overline{U}_m - \mathbb{E}[U] \right) \le  \sqrt{2m \mathbb{E}[U^2]x} +  \dfrac{x(b-a)^2}{3}\right) \ge 1 - \exp(-x).
$$
Moreover, as $\overline{U}_m  = \overline{V}_m + (\overline{X}_m - \mathbb{E}[X])^2$ and using the boundedness of $U$ we get
\begin{equation*}
\mathbb{P} \left( \overline{V}_m  \le \mathbb{E}[U] +  \sqrt{2\mathbb{E}[U] \dfrac{(b-a)^2x}{m}} + \dfrac{x(b-a)^2}{3m} \right)  \ge 1 - \exp(-x),
\end{equation*} 
which ends the proof since $\mathbb{E}[U] = \mathbb{V}[X]$. \qed
\end{proof}
We recall a second result in the line of \cite[Lemma 3]{Mnihthesis}.
\begin{lemma}
Let $q,k$ be positive real numbers. If $t >0$ is a solution of 
\begin{equation}
 \frac{\log qt}{t} = k, \label{eq:log1}
\end{equation}
then 
\begin{equation}
t \le  \dfrac{2}{k} \log \dfrac{2q}{k}.
\label{eq:log2}
\end{equation}
\item 
Moreover, if $t'$ is such that 
\begin{equation}
t^{'} \ge \dfrac{2}{k} \log \dfrac{2q}{k},
\label{eq:log3}
\end{equation}
then
 \begin{equation}
\dfrac{\log qt^{'}}{t^{'}} \le k.
\label{eq:log4}
\end{equation}
\label{lemma:log}
\end{lemma}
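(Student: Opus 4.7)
The plan is to reformulate both statements in terms of the single function $h(t) := kt - \log(qt) = kt - \log q - \log t$ on $(0,\infty)$. Since the equation $\log(qt)/t = k$ is equivalent to $h(t) = 0$, and the inequality $\log(qt')/t' \le k$ is equivalent to $h(t') \ge 0$, the lemma becomes: every positive zero of $h$ is at most $t_0 := \tfrac{2}{k}\log\tfrac{2q}{k}$, and $h \ge 0$ on $[t_0,\infty)$.

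First I would analyze the shape of $h$. One computes $h'(t) = k - 1/t$, so $h$ is strictly decreasing on $(0, 1/k)$ and strictly increasing on $(1/k, \infty)$, with minimum $h(1/k) = 1 + \log(k/q)$. When $k \ge q/e$, this minimum is non-negative, hence $h \ge 0$ everywhere, which settles the second claim and makes the first claim either vacuous (when $k > q/e$) or a trivial check at the unique zero $t = 1/k \le t_0$. The substantive case is therefore $k < q/e$, in which $h$ has exactly two zeros $t_1 < 1/k < t_2$.

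Next I would substitute $t_0$ directly into $h$ to obtain the clean identity
\[
h(t_0) = 2\log(2q/k) - \log(2q/k) - \log\log(2q/k) = \log(2q/k) - \log\log(2q/k).
\]
The elementary inequality $x > \log x$ for $x > 0$, applied to $x = \log(2q/k)$ (which is positive since $k < q/e$ forces $2q/k > 2e > 1$), immediately yields $h(t_0) \ge 0$. The same regime also gives $t_0 \ge 1/k$: this amounts to $\log(2q/k) \ge 1/2$, which follows from $2q/k > 2e > \sqrt{e}$.

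Both claims then fall out from monotonicity of $h$. For the second statement, any $t' \ge t_0 \ge 1/k$ satisfies $h(t') \ge h(t_0) \ge 0$. For the first statement, the smaller zero obeys $t_1 < 1/k \le t_0$ automatically, while $t_2 > t_0$ would force $0 = h(t_2) > h(t_0) \ge 0$ by strict increase of $h$ past $1/k$, a contradiction, so $t_2 \le t_0$. The only real obstacle in this plan is bookkeeping the degenerate parameter ranges (e.g.\ $k \ge 2q$ making $t_0 \le 0$, or $k \ge q/e$ removing all zeros), but in each such range $h$ is globally non-negative, so the statements are immediate or vacuous. The genuinely informative step is the one-line computation of $h(t_0)$, which is precisely where the constant $2$ in the definition of $t_0$ justifies itself.
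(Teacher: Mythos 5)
Your proof is correct, and it takes a somewhat different route from the paper's. The paper handles the two claims with two different devices: for \eqref{eq:log2} it uses concavity of $\log$, namely $\log(qt)\le \log(qs)+(t-s)/s$ evaluated at the tangent point $s=2/k$, which in fact yields the marginally sharper bound $t\le \tfrac 2k(\log\tfrac{2q}{k}-1)$; for \eqref{eq:log4} it studies the unimodal map $\varphi(s)=\log(qs)/s$, locates the two roots $t_1<e/q<t_2$ of \eqref{eq:log1}, and uses the already-proved \eqref{eq:log2} to place $t'\ge t_2$ in the decreasing range of $\varphi$. You instead work throughout with $h(t)=kt-\log(qt)$, and your single substantive computation is the evaluation $h(t_0)=\log\tfrac{2q}{k}-\log\log\tfrac{2q}{k}\ge 0$ via $x>\log x$, after which both claims follow from the monotonicity of $h$ on either side of $1/k$. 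The two arguments are close in spirit (both exploit unimodality of essentially the same object), but your version has two small advantages: it is unified, and it establishes \eqref{eq:log4} without presupposing that \eqref{eq:log1} has a solution --- the paper's proof of the second part explicitly starts from ``the existence of a solution implies $k\le q/e$,'' whereas your case $k\ge q/e$ (where $h\ge 0$ everywhere) covers the remaining parameter range outright. The paper's tangent-line trick, in exchange, explains the constant $2$ with one line and gives the extra $-\tfrac 2k$ in the bound, which is then discarded anyway. Your bookkeeping of the degenerate ranges ($k\ge 2q$, $k\ge q/e$) is accurate and complete.
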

\begin{proof} Let  $t>0$ be a solution  of \eqref{eq:log1}. Since the function $\log$ is concave, it holds for all $s>0$
$$
 kt = \log(q t)\le   \log(q s) + \frac{ t - s}{s} .
$$
In particular, for  $s =  \frac{2}{k} > 0$  we get 
\begin{equation}
t \le \frac{2}{k} \left( \log \dfrac{2q}{k}- 1 \right) \le \frac{2}{k}  \log \dfrac{2q}{k}  ,
\label{eq:lemmelogavectermenegatif}
\end{equation}
which yields \eqref{eq:log2}. \\
Now, let $\varphi : s \mapsto \frac{\log(qs)}{s}$ defined for $s>0$. This function  is continuous, strictly increasing on $(0,  \frac e q]$ and strictly decreasing on $[\frac e q, \infty)$ so it admits a maximum at $t=\frac e q$.
The existence of a solution $t>0$ of \eqref{eq:log1} implies  $ k \le \frac q e$. 
If $ k = \frac q e$ then $t =   \frac e q$ and $\varphi(t)$  is the maximum of $\varphi$. For any $t' >0$, in particular satisfying \eqref{eq:log3}, we have $\varphi(t') \le \varphi(t) = k$ which is  \eqref{eq:log4}.  If $ 0  < k < \frac q e$, there are two solutions $t_1,t_2$ to \eqref{eq:log1}  such that $0 < t_1< \frac e q  < t_2$. By \eqref{eq:log2} and \eqref{eq:log3} we have $t'\ge t_2 >\frac e q $ and since $\varphi$ is stricly discreasing on $[\frac e q, \infty)$ it holds $\varphi(t')\le \varphi(t_2) = k$ , that is \eqref{eq:log4}.\qed
\end{proof}

\section{Proof of Proposition \ref{prop:complexityMbernstein}} \label{sec:prop3.4}

  Let us define the two events $A  = \bigcap_{m \ge 1} A_m$ and $B  =  \bigcap_{m \ge 1} B_m$ with 
$$
A_m = \left\{ \overline{V}_m \le \sigma^2 + \sqrt{2 \sigma^2 (b-a)^2 \log(3/d_m)/m} + \log(3/d_m) (b-a)^2 /3m \right \},
$$
and 
$$
B_m = \left\{ | \overline{Z}_m - \mu | \le c_m \right\}.
$$
Applying Lemma \ref{lemma:varianceempirique} with $x=\log(3/d_m)$ for  $A_m, m \ge 1$ together with a union bound argument leads to $\mathbb{P}(A) \ge 1 - \delta/3$. Similarly, using a union bound argument and  Theorem \ref{th:bernstein} with $x=\log(3/d_m)$, for $B_m, m \ge 1$, gives $\mathbb{P}(B) \ge 1 - \delta$. By gathering these two results we have 
\begin{equation}
\mathbb{P} \left( A \cap B \right) \ge 1 - \left( \mathbb{P} (A^c) +  \mathbb{P}(B^c) \right) \ge 1 - \frac{4 \delta}{3},
\label{eq:probAB}
\end{equation}
where $A^c$ and $B^c$ correspond respectively to the complementary events of $A$ and $B$. \\
It remains to  prove that $A\cap B$ implies
\begin{equation}
 M \leq \left \lceil  \frac{2}{\nu} \left[ \log\left( \frac{3}{\delta c} \right) + p\log( \frac{2p}{\nu} ) \right] \right \rceil\label{eq:majorationM}
,\end{equation}
which will {prove} \eqref{eq:Mmajoreproba}.
 In what follows, we suppose that $A \cap B$ holds.  \\
First we derive an upper bound for $\overline{V}_m$. Since $A$ holds, we have 
\begin{equation}
\label{eq:varineq}
 \overline{V}_m \le \sigma^2 + \sqrt{2 \sigma^2 (b-a)^2 \log(3/d_m)/m} + \log(3/d_m) (b-a)^2 /3m.
\end{equation}
Lemma \ref{lemma:log}  with $k = \frac{\sigma^2}{p (b-a)^2}$ and $q = \left({\frac{3}{\delta c}}\right)^{1/p}$ gives  
for any integer  $m \ge M_{\sigma^2}$
\begin{equation}
\label{eq:sigma2}
 \dfrac {(b-a)^2}{ m } \log \dfrac{ 3 }{ d_m}  \le \sigma^2, 
\end{equation}
where 
\begin{equation*}
M_{\sigma^2} = \dfrac{2(b-a)^2}{\sigma^2} \left( p \log \left( \dfrac{2 p (b-a)^2}{\sigma^2} \right) + \log \left( \dfrac{3}{c \delta} \right) \right).
\end{equation*} 
Again, Lemma \ref{lemma:log}  with $k = \frac{\epsilon^2 \mu^2}{p (b-a)^2}$ and $q = \left({\frac{3}{\delta c}}\right)^{1/p}$ gives  for any integer  $m \ge M_{\epsilon^2 \mu^2}$
\begin{equation}
\label{eq:epsmu2}
\dfrac {(b-a)^2}{ m } \log \dfrac{ 3 }{ d_m}   \le \epsilon^2 \mu^2,
\end{equation}
where 
\begin{equation*}
M_{\epsilon^2 \mu^2} = \dfrac{2(b-a)^2}{\epsilon^2 \mu^2} \left( p \log \left( \dfrac{2p (b - a)^2}{\epsilon^2 \mu^2} \right) + \log \left( \dfrac{3}{c \delta} \right)  \right).
\end{equation*} 
For all $m \ge  \min \left( M_{\sigma^2}, M_{\epsilon^2 \mu^2} \right)$, i.e. $m\ge  M_{\sigma^2}$ or $m\ge M_{\epsilon^2 \mu^2}$, we obtain from \eqref{eq:varineq} and \eqref{eq:sigma2}, or \eqref{eq:varineq} and \eqref{eq:epsmu2},  that 
\begin{equation}
\overline{V}_m \le (1 + \sqrt{2} + 1/3) \max(\sigma^2,\epsilon^2 \mu^2).
\label{eq:boundvar}
\end{equation}
In what follows, we define $\underline{M}=  \min \left( M_{\sigma^2}, M_{\epsilon^2 \mu^2} \right)$. 
Now, we deduce  from \eqref{eq:boundvar} an upper bound for  $c_m$. By definition, 
$$
c_m  =  \sqrt{\dfrac{2\overline{V}_m \log(3/d_m)}{m}} + \sqrt{\dfrac{3 (b-a)^2 \log(3/d_m)^2}{m^2} },
$$
then for all integer $m \ge \underline{M}$ and using either \eqref{eq:sigma2}, or \eqref{eq:epsmu2}, we have
\begin{equation}
c_m 
 \le  \sqrt{\dfrac{  \alpha \log (3/d_m) }{m}},
\label{eq:boundcm}
\end{equation}
with $ \alpha := (\sqrt{2 + 2\sqrt{2} + 2/3} + 3)^2 \max(\sigma^2,\epsilon^2 \mu^2)$.\\

Now, using  \eqref{eq:boundcm}, we seek a bound for $M$, the smallest integer such that $c_M \le \epsilon | \overline{Z}_M |$.
To that aim, let us introduce the integer $M^\star$, 
\begin{equation}
M^\star = \min \left \{ m \in \mathbb{N}^*: m \ge \underline{M},  \sqrt{\dfrac{\alpha \log (3/d_{m}) }{{m}}} \le \dfrac{\epsilon | \mu |}{1 + \epsilon}\right\},
\label{eq:conditionmstar}
\end{equation}
 and the integer valued random variable $M^+$
\begin{equation}
M_+ = \min \left \{ m \in \mathbb{N}^* :   c_m \le  \dfrac{\epsilon  | \mu |}{1 + \epsilon} \right \}.
\label{eq:conditionm}
\end{equation}
If $\underline{M}\ge  M_+$ then $M^\star \ge M_+$.  \\
Otherwise, $ \underline{M} < M_+$ and we have 
$
M_+ =  \min \left \{ m \ge \underline{M}  :   c_m \le  \dfrac{\epsilon  | \mu |}{1 + \epsilon} \right \}.
$
Moreover, as \eqref{eq:boundcm} holds for all  $m \ge \underline{M} $, we get the inclusion
$$
 \left \{ m \in \mathbb{N}^*: m \ge \underline{M},  \sqrt{\dfrac{\alpha \log (3/d_{m}) }{{m}}} \le \dfrac{\epsilon | \mu |}{1 + \epsilon}\right\} \subset \left\{ m \in \mathbb{N}^* : m \ge \underline{M} ,   c_m \le  \dfrac{\epsilon  | \mu |}{1 + \epsilon} \right \}. 
$$
Taking the $\min$ leads again to $M^\star \ge M_+$. Moreover, since $B$ holds, $|\mu| - c_{M_+} \le | \overline{Z}_{M_+} |$ and using \eqref{eq:conditionm} it implies that $c_{M_+} \le \epsilon | \overline{Z}_{M_+} |$. By definition of $M$  we get $M_+ \ge M$. Hence, we have $M^\star \ge M$. To conclude the proof, it remains to  find an upper bound for $M^\star$. 
Applying again Lemma \ref{lemma:log} with $k = \frac{\epsilon^2 \mu^2}{(1+\epsilon)^2 \alpha p }$ and $q = \left(\frac{3}{\delta c}\right)^{1/p}$ gives for any integer $m \ge M_f$
\begin{equation}
\label{eq:boundMf}
\dfrac{\alpha \log (3/d_{m}) }{{m}} \le \dfrac{\epsilon^2  \mu ^2}{(1 + \epsilon)^2}
\end{equation}
with 
$$
M_f = \dfrac{2(1+\epsilon)^2\alpha}{\epsilon^2 \mu^2} \left( p \log \left( \dfrac{2p(1+\epsilon)^2\alpha}{\epsilon^2 \mu^2} \right) + \log \left( \dfrac{3}{c \delta} \right)  \right).
$$
If $M_f \le \underline{M}$, \eqref{eq:conditionmstar} and \eqref{eq:boundMf} imply $M^\star =  \lceil  \underline{M}\rceil$, where $ \lceil \cdot  \rceil$ denotes the ceil function. Otherwise $M_f > \underline{M}$ and we obtain $M^\star \le \lceil M_f \rceil$. Thus, it provides the following upper bound 
$$
M^\star \le \max \left ( \lceil \underline{M}\rceil,  \lceil M_f \rceil \right ) 
=   \lceil \max \left( \underline{M} ,M_f \right ) \rceil.
$$
Introducing $\nu = \min \left(   \frac{\max(\sigma^2,\epsilon^2 \mu^2)}{(b-a)^2}  , \frac{\epsilon^2 \mu^2}{(1+\epsilon)^2 \alpha} \right)$ we have from the definition of $M_{\sigma^2}, M_{\epsilon^2\mu^2}$ and $M_f$
\begin{equation}
M^\star \le  \left \lceil \dfrac{2}{\nu} \left( p \log \left( \dfrac{2p}{\nu} \right) + \log \left( \dfrac{3}{c \delta} \right)  \right)  \right \rceil.
\label{eq:lastmajoration}
\end{equation}
Since $M^\star \ge M$ and $A \cap B$ implies \eqref{eq:lastmajoration}, we deduce that $A \cap B$ implies \eqref{eq:majorationM}, which concludes the proof of the first result. \\
Let us now prove the result in expectation. Let $K := \left \lceil \dfrac{2}{\nu} \left( p \log \left( \dfrac{2p}{\nu} \right) + \log \left( \dfrac{3}{c \delta} \right)  \right)  \right \rceil.$ We first note that 
$$
\mathbb{E}(M) = \sum_{k=0}^\infty \mathbb{P}(M>k) \le K + \sum_{k=K}^\infty \mathbb{P}(M>k) .
$$
If $M>k $, then $c_k >\epsilon \vert \bar Z_k \vert$. 
For $k\ge K$, we would like to prove that $c_k >\epsilon \vert \bar Z_k \vert$ implies $(A_k\cap B_k)^c$, or equivalently that $A_k\cap B_k$ implies 
$c_k \le \epsilon \vert \bar Z_k \vert$. For $k\ge K$, $A_k$ implies  \eqref{eq:boundcm} and \eqref{eq:boundMf}, and therefore 
$c_k \le \frac{\epsilon\vert \mu\vert}{1+\epsilon}$. Also, $B_k$ implies $\vert \mu \vert \le \vert \bar Z_k \vert + c_k$. Combining the previous inequalities, we easily conclude that $A_k\cap B_k$ implies $c_k \le \epsilon \vert \bar Z_k \vert$.
For $k\ge K$, we then have 
$\mathbb{P}(M>k) \le \mathbb{P}(c_k >\epsilon \vert \bar Z_k \vert) \le  \mathbb{P}((A_k \cap B_k)^c) \le \mathbb{P}(A_k^c) +  \mathbb{P}(B_k^c) \le 4d_k/3$, and then
$$
\mathbb{E}(M) \le K + \sum_{k=K}^\infty 4d_k/3 \le K + 4\delta/3
,$$
which ends the proof.

\end{document}